\documentclass[12pt]{amsart}

\makeatletter
\def\@settitle{\begin{center}%
		\baselineskip14\p@\relax
		%\bfseries
		\normalfont\Large%<- NEW
		\@title
	\end{center}%
}
\makeatletter
\def\@settitle{\begin{center}%
		\baselineskip14\p@\relax
		%\bfseries
		\normalfont\Large%<- NEW
		\@title
	\end{center}%
}
\makeatother

\usepackage{graphicx,epsfig}

\usepackage{bbm}
\usepackage[title]{appendix}
\usepackage{esvect}
\usepackage{mathtools}
\usepackage{tikz-cd}
\usepackage{fancyhdr}
\usepackage{amsfonts}
\usepackage{amsmath}
\usepackage{amssymb}
\usepackage{mathabx}
\usepackage{amsthm}
\usepackage{tikz-cd}
\usepackage{parskip}
\usepackage{times} %make sure that the times new roman is used

\usepackage{mathrsfs}
\usepackage{scalerel,amssymb}

\usepackage{amsmath,amssymb,latexsym, amsfonts, amscd, amsthm, xy}
\input{xy}
\xyoption{all}

\newcommand{\Z}{\mathbb{Z}}
\newcommand{\Q}{\mathbb{Q}}
\newcommand{\R}{\mathbb{R}}
\newcommand{\C}{\mathbb{C}}

\newcommand{\SL}{\mathrm{SL}}

\newcommand{\GL}{\mathrm{GL}}

\input{xy}
\xyoption{all}

%options for class are \documentclass[oneside, draft]{amsart}

\makeindex \setcounter{tocdepth}{2}

%\voffset = -20pt \hoffset = -60pt \textwidth = 520pt \textheight
%=650pt \headheight = 8pt \headsep = 10pt

\voffset = -20pt
\hoffset = -80pt
\textwidth = 520pt \textheight
=650pt \headheight = 8pt \headsep = 10pt

\usepackage{fancyhdr}
\usepackage{amsfonts}
\usepackage{amsmath}
\usepackage{amssymb}
\usepackage{amsthm}
\usepackage{tikz}
\usepackage{tikz-cd}
\usepackage{parskip}
\usepackage{times} %make sure that the times new roman is used
%References as links

\usepackage[
bibstyle=alphabetic,
citestyle=alphabetic,
hyperref=true,
backref=false]{biblatex}
\usepackage{hyperref}
\hypersetup{pdftoolbar=true, pdftitle={GKform}, pdffitwindow=true, colorlinks=true, citecolor=green, filecolor=black, linkcolor=blue, urlcolor=blue, hypertexnames=false}
\usepackage{cleveref}
\makeindex \setcounter{tocdepth}{2}

%\voffset = -20pt \hoffset = -60pt \textwidth = 520pt \textheight
%=650pt \headheight = 8pt \headsep = 10pt

\voffset = -20pt
\hoffset = -80pt
\textwidth = 520pt \textheight
=650pt \headheight = 8pt \headsep = 10pt

\addbibresource{bruhat.bib}

\newtheorem{thm}{Theorem}[section]
\newtheorem{prop}[thm]{Proposition}
\newtheorem{lem}[thm]{Lemma}
\newtheorem{cor}[thm]{Corollary}

\theoremstyle{definition}
\newtheorem{rem}[thm]{Remark}

\numberwithin{equation}{section}

\title[Explicit formula for theta lift via Bruhat decomposition]{Explicit formula for the $(\GL_2,\GL_2)$ theta lift via Bruhat decomposition \large Formule explicite pour la correspondance thêta $(\GL_2,\GL_2)$ via la décomposition de Bruhat}
\author{Peter Xu}

\pagestyle{fancy}
\usepackage{setspace}
\doublespacing
\begin{document}
	
	\makeatletter
	\@setabstract
	\makeatother
	\maketitle
	
	\begin{abstract}
		Using combinations of weight-$1$ and weight-$2$ of Kronecker-Eisenstein series to construct currents in the distributional de Rham complex of a squared elliptic curve, we find a simple explicit formula for the type II $(\GL_2,\GL_2)$ theta lift without smoothing, analogous to the classical formula of Siegel for periods of Eisenstein series. For $K$ a CM field, the same technique applies without change to obtain an analogous formula for the $(\GL_2(K),K^\times)$ theta correspondence. 
		
		\,
		
		Utilisant des combinaisons des séries et courants Kronecker-Eisenstein de poids $1$ et $2$ dans des complexes de Rham d'une courbe elliptique carrée, nous trouvons une formule simple explicite pour la correspondance thêta de type II pour $(\GL_2,\GL_2)$ sans stabilisation; cela est un analogue de la formule classique de Siegel pour les périodes des séries d'Eisenstein. Pour $K$ un corps avec CM, la même technique s'applique pareil pour obtenir une telle formule pour la correspondance $(\GL_2(K),K^\times)$.
	\end{abstract}
	
	\section{Introduction}
	
	In the article \cite{BCG} and its sequel \cite{BCGV}, the authors construct a theta lift from the $(n-1)$-homology of $\GL_n$ to modular forms for $\GL_2$. This construction uses an equivariant polylogarithm class; in particular, when $n=2$, the class in question corresponds to the algebraic $\GL_2$-action on the squred universal elliptic curve over the upper half-plane (or its modular curve quotients).
	
	This class can be computed by various means: in \cite{BCG}, the lifts of certain modular geodesics are computed via the seesaw principle, while in \cite{BCGV}, certain stabilizations of the lift are computed by relating them to values ``at the boundary,'' i.e. by partial modular symbols. Related cocycles valued in $K$-theory were constructed in \cite{SV} and \cite{BPPS} (for general $n$; the regulators of these cocycles these correspond to parabolic stabilizations of the theta lift. We also construct both parabolic and non-parabolic stabilized versions of the $K$-theory cocycle for general $n$ \cite{X3}.
	
	In the previous paper \cite{X4}, we constructed explicit elements in the $\GL_2$-equivariant distributional de Rham complex of a torus to re-derive classical ``unstabilized'' formulas for periods of weight-$2$ Eisenstein series. The principle used was that in cohomology, these Eisenstein series arise as pullbacks of an equivariant polylogarithm class on $(S^1)^2$, allowing us to compute them via equivariant-geometric means. In the present article, by applying the same technique to the squared universal elliptic curve, we obtain novel, similarly explicit formulas for the $(\GL_2,\GL_2)$-theta lift. These formulas especially shed light on the Eisenstein component of the lift, exhibiting the particular combiniation of weight-$2$ Eisenstein series which arise from specializations at parabolic matrices.
	
	Due to the nature of the theta lift's construction, instead of the equivariant distributional de Rham complex of \cite{X4}, we will use the equivariant weight-$2$ distributional \emph{Dolbeault} complex of our squared universal elliptic curve. The explicit elements in loc. cit. were comprised of weight-$1$ and weight-$2$ Bernoulli polynomials $B_1(z)$ and $B_2(z)$; these will be replaced with theta series $E_1(\tau,z)$ and $E_2(\tau,z)$ interpolating weight-$1$ and $2$ Eisenstein series. To find the explicit forms of these elements, as in loc. cit., we will need to use an explicit version of the Bruhat decomposition of 
	\[
	\GL_2(\Q)=BwB
	\]
	where 
	\[
	w=\begin{pmatrix} 0 & 1 \\ 1 & 0 \end{pmatrix}
	\]
	represents a Weyl element, and $B\subset \GL_2(\Q)$ is the upper triangular Borel subgroup.
	
	\subsection{Related work and future directions}
	
	A similar computation using complexes to compoute equivariant cohomology was completed in \cite[\S3]{KS}, in a more general setting using a generalized Kronecker-Eisenstein series. Our calculation can be considered as a more explicated form of the calculation there, for the self-product of the universal elliptic curve with trivial coefficients, except that:
	\begin{enumerate}
		\item we treat the entire action of $\GL_2$ at once, instead of just working with the equivariance under one subtorus at a time;
		\item we construct also a cocycle for a larger group with no geometric action on the fibers ($\GL_2(\Q)$ instead of $\GL_2(\Z)$, or $\GL_2(K)$ in the CM case);
		\item we do not rely on any stabilization or smoothing coming from a degree-zero torsion cycle.
	\end{enumerate}
	We restrict to this setting both because it is particularly arithmetically rich, and allows for a considerably simpler and less technical exposition. Futhermore, by eschewing smoothing, the resulting formulas are more transparently related to classical objects such as classical holomorphic Eisenstein series or the period formulas of Siegel (as in \cite{X4}), and corresponds directly to the theta kernel of integration of \cite{BCG}. Furthermore, our formulas can be specialized (with cohomological meaning) at arbitrary non-zero torsion points, thanks to Theorem \ref{thm:inj}.
	
	As we noted in our previous article \cite{X4} in a simpler setting, our same methods should produce analogous formulas with twisted coefficients corresponding to higher-weight automorphic sheaves; the resulting formulas will involve progressively more complicated combinations of higher-weight specializations of Kronecker-Eisenstein series. Another natural generalization would be to use the more complicated decomposition of parabolic subgroups for $\GL_n$ and $n$-fold products of elliptic curves; in this setting, our unsmoothed formulas grow in complexity very quickly with $n$, and it is not clear to us if there exists a symbol formalism that can help one understand this complexity. Furthermore, the direct analogue of Theorem \ref{thm:inj} does not hold for $n>2$, so the ability to specialize at torsion sections is more limited. Nevertheless, in the \emph{smoothed} setting, things are much simpler and can be captured by symbols coming from elementary linear algebra; we follow this approach for elliptic schemes in \cite{X3}.
	
	\subsection{Acknowledgements} 
	
	I would like to thank first and foremost Nicolas Bergeron, to whose ideas I owe all of my work in this subject area; and who very kindly accommodated me in Paris in February 2023, listening to and helping to develop my ideas even with the limited energy he had at the time. This article is dedicated in his memory. I would also like to thank Romyar Sharifi, Marti Roset Julia, and Pierre Charollois for helpful consultations during the writing process.
	
	\section{Eisenstein theta lift for $(\GL_2,\GL_2)$}
	
	The theta lift we are considering was originally constructed purely analytically in \cite{BCG}, but we will find the algebraic construction of \cite{BCGV} more useful; the comparison between the two is also proven in loc. cit. (See also the author's thesis \cite{X}.)
	
	To begin, we consider the complex upper half-plane $\mathcal{H}$, and the universal elliptic curve over it given by the complex uniformization
	\[
	E:=(\mathcal{H} \times \mathbb{C})/\{(\tau, \Z + \tau \Z)\}.
	\]
	Here, we will write $\tau=x+yi$ for the coordinate on the base, and $z$ for the fiberwise coordinate on $\C$. This fiber bundle has the complex analytic action (as a fiber bundle) of $\GL_2(\Z)$ by 
	\[
	\begin{pmatrix}
		a&b \\ c& d
	\end{pmatrix} (\tau, z) = \left(\frac{a\tau + b}{c\tau + d}, \frac{z}{c\tau + d}\right).
	\]
	We will also write $z=u-\tau v$ for $u,v\in \R$; this then allows $\GL_2(\Z)$ to act on the column vector $(u,v)$ by the standard left action.
	
	The principal space we will work on is the self-product $T:= E\times_\mathcal{H} E$, which we will coordinatize by $\tau$ on the base, and $z_1$ and $z_2$ on the two elliptic curves. For us, the important property of $T$ is that it has a fiberwise action by endomorphisms of $M_2(\Z)$, coming from the endomorphism action of $\Z$ on any elliptic scheme by isogenies, i.e. the integer $a\in \Z$ acts by the finite multiplication-by-$a$ isogeny $[a]_*$ of degree $a^2$. In particular, the group $\GL_2(\Z)$ acts by automorphisms on $T$.
	
	For any torsion-free subgroup $H\subset \text{GL}_2(\mathbb{Z})$, the $Y(H):= H \backslash \mathcal{H}$ can be equipped with an algebraic structure making it the fine moduli space of elliptic curves over $\C$-schemes with $H$-level structure. There is also the corresponding universal elliptic curve
	\[
	E(H) := H \backslash (\mathcal{H}\times (\C/\mathbb{Z}^2)).
	\]
	and its square
	\[
	T(H):= E(H)\times_{Y(H)} E(H),
	\]
	whose fiber over a point of the moduli space is the square of the corresponding elliptic curve. In this article, for technical reasons, we will primarily work over $\mathcal{H}$ rather than after quotient by $H$; however, the output of our construction will visibly descend through these quotients.
	
	Let $\Gamma\subset \GL_2(\Z)$ be any subgroup; we will work with the $\Gamma$-equivariant cohomology of $\Gamma$-subspaces of $T$. In this article, we will follow the approach pioneered by \cite{KS} to define the theta lift algebraically in coherent cohomology of relative sheaves of differentials over $\mathcal{H}$; see the author's thesis \cite{X} for details about how this relates to the algebraic construction of \cite{BCGV}.
	
	Following \cite{KS}, if we fix an auxiliary integer $c>1$, there is a unique (``polylogarithm'') cohomology class
	\[
	z_\Gamma^{(c)} \in H^{1}_\Gamma(T-T[c], \Omega^2_{T-T[c]/\mathcal{H}})^{(0)}
	\]
	characterized (as indicated by the superscript $(0)$) by being invariant by $[a]_*$ for all integers $a$ relatively prime to $c$, and having residue 
	\[
	[T[c]-c^2\{0\}]\in H^2_{\Gamma,T[c]}(T, \Omega^2_{T/\mathcal{H}})
	\]
	where $H^\bullet_{\Gamma,T[c]}$ denotes the equivariant cohomology with support. Let $S\subset T$ be a nonempty $\Gamma$-fixed arrangement of $T[c]$-translates of elliptic subgroups of $T$; we restrict the class $z_\Gamma^{(c)}$ to
	\[
	H^{1}_\Gamma(T-S, \Omega^2_{T-S/\mathcal{H}})^{(0)} := \varinjlim_{S_f\subset S} H^{1}_\Gamma(T-S_f, \Omega^2_{T-S_{f}/\mathcal{H}})^{(0)}
	\]
	where the limit is over finite subarrangements. The various $T-S_f$ are cofinally fiberwise Stein manifolds over the contractible base $\mathcal{H}$, so have vanishing higher coherent cohomology; thus, in the Hochschild-Serre spectral sequence we have a Hochschild-Serre edge map
	\[
	e:  H^{1}_\Gamma(T-S, \Omega^2_{T-S/\mathcal{H}})^{(0)} \to  H^1(\Gamma, H^0(T-S, \Omega^2_{T-S/\mathcal{H}})) := H^1(\Gamma, \varinjlim H^0(T-S_f, \Omega^2_{T-S_f/\mathcal{H}}))^{(0)} 
	\]
	where here we use the exactness of filtered colimits.
	
	Thanks to a similar argument as the one to uniquely define $z_\Gamma^{(c)}$, using projectors built from isogenies, $e(z_\Gamma^{(c)})$ can be refined to a class which we denote
	\[
	Z^{(c)}_\Gamma \in H^1(\Gamma, H^0(T-S,\Omega^2_{T-S/\mathcal{H}})^{(0)} )
	\]
	i.e. a cohomology class valued in prime-to-$c$ isogeny-fixed forms (instead of being isogeny-fixed only up to coboundary). 
	
	This group cohomology class valued in $2$-forms on (an open subspace of) the bundle $T$ parameterizes a family of theta lifts: if one contracts with the $\GL_2(\Z)$-fixed vector field $\partial z_1 \otimes \partial z_2$ and pulls back by any $\Gamma$-fixed torsion section $x$ disjoint from $S$, then 
	\[
	\Theta^{(c)}_{\Gamma, x}:= x^*(Z^{(c)}_\Gamma \cap \partial z_1 \otimes \partial z_2) \in H^1(\Gamma, H^0(\mathcal{H}, \omega^{\otimes 2}))
	\]
	can be viewed via integration as a map from $H_1(\Gamma)$ to a weight-$2$ modular form inside $H^0(\mathcal{H}, \omega^{\otimes 2})$, as the values of the integral will be fixed by any level structure $H$ stabilizing the section $x$. Note that for any given nonzero $x$, one can always find a disjoint $S\subset T$ invariant by its stabilizer, so all torsion sections have associated pullbacks. Furthermore, since one can always restrict away from the union of two different $S$, it is clear the pulled back class is independent of the choice of removed sub-elliptic curves.
	If we fix any point of the base $\tau\in \mathcal{H}$, we can similarly define the fiberwise version of these cycles by restriction,
	\[
	Z^{(c)}_\tau \in H^1(\Gamma, H^0(T_\tau-S_\tau,\Omega^2_{T_\tau-S_\tau})^{(0)} ),\Theta^{(c)}_{\tau, x}:= x^*(Z^{(c)}_\tau \cap \partial z_1 \otimes \partial z_2) \in H^1(\Gamma, \C)
	\]
	where the subscript $\tau$ refers to the fiber over that point. (We here omit the $\Gamma$ from the notation for brevity, since these classes are compatible under restriction of $\Gamma$ in any case.)
	
	Out of technical convenience, we will obtain our explicit formulas for the fiberwise classes $\Theta^{(c)}_{\tau, x}$. Since the action of $\Gamma$ is trivial for these classes, however, this will also imply formulas for the classes $\Theta^{(c)}_{\Gamma, x}$.\footnote{With a little extra work, one can also prove similar formulas for the ``spread out'' classes $Z^{(c)}_\Gamma$, but this is not the focus of this article.}
	
	\subsection{Computing with complexes}
	
	We now define the main tool we need, in this setting: the weight-$2$ \emph{distributional Dolbeault complex}. Let $X$ be a complex manifold of complex dimension $d$, and write $\mathcal{A}^{p,q}_X$ for the sheaves of smooth complex differentials on $X$ of holomorphic degree $p$ and antiholomorphic degree $q$; these sheaves are flabby on $X$, so we (a bit abusively) use this same notation for their global sections. These forms support the pair of anti-commuting Dolbeault differentials
	\[
	\partial: \mathcal{A}^{p,q}_{X} \to \mathcal{A}^{p+1,q}_{X}, \overline{\partial}: \mathcal{A}^{p,q}_{X}\to \mathcal{A}^{p,q+1}_{X}
	\]
	which are square zero, satisfy the Leibniz rule, and sum to the usual exterior derivative on differential forms. For an integer $w\ge 0$, we recall that the holomorphic $(w,0)$-differentials are precisely the kernel of $\overline{\partial}$ inside $A^{w,0}_{X}$.
	
	Ordinarily, the Dolbeault resolution of $\Omega^2_{X}$ is constructed from these sheaves, but instead, we dualize: first, write $\mathcal{A}^{p,q}_{X,c}$ for the space of \emph{compactly supported} relative $(p,q)$-differentials on $X$, carrying the same differentials as the usual ones (though they do not form a sheaf). Then we define
	\[
	\mathcal{D}_{X}^{p,q} = \hom(\mathcal{A}^{d-p,d-q}_{X,c}, \C)
	\]
	which we call the space of \emph{$(p,q)$-currents on $X$}. Unlike the compactly supported forms, these \emph{do} form a sheaf, since they can be restricted along inclusions of opens in adjunction to the pushforward of compactly supported forms. Since these pushforwards are injective, the sheaf restrictions maps are surjective and thus each $\mathcal{D}_{X}^{p,q}$ is flabby. We therefore again abusively use the same notation for the sheaf and its global sections. For these properties of currents, the original reference is \cite{dR}.
	
	The Dolbeault operators
	\[
	\partial: \mathcal{D}^{p,q}_{X}\to \mathcal{D}^{p+1,q}_{X}, \overline{\partial}: \mathcal{D}^{p,q}_{X}\to \mathcal{D}^{p,q+1}_{X}
	\]
	are defined as the graded adjoints of the exterior derivative on forms, i.e.
	\[
	(\partial c)(\eta) := (-1)^{\deg c} c(\partial \eta),(\overline{\partial} c)(\eta) := (-1)^{\deg c} c(\overline{\partial} \eta)
	\]
	where $\deg$ refers to the total degree (i.e. $p+q$ for a $(p,q)$-current).
	
	There is a natural inclusion 
	\[
	\nu_X: \mathcal{A}^{p,q}_{X} \to \mathcal{D}^{p,q}_{X/}
	\]
	given by 
	\[
	\omega\mapsto \left(\eta \mapsto \int_X \omega \wedge \eta\right)
	\]
	where $\eta$ is a compactly supported smooth $(d-p,d-q)$-form; the integral makes sense and is finite since the wedge product is also compactly supported, and of degree $(d,d)$. It is clear that $\nu_X$ commutes with the Dolbeault differentials $\partial$ and $\overline{\partial}$. Furthermore, since $(p,q)$-compactly supported forms have covariant functoriality by flat holomorphic maps and contravariant by proper holomorphic maps, by adjunction, $(p,q)$-relative currents can be pulled back by flat maps and pushed forward by proper ones. One can check that the push-pull formula for differential implies that $\nu_X$ is functorial for flat pullback and proper pushforward of complex manifolds.\footnote{For orientation-reversing maps, one has to be careful about this comparison map, since they introduce an extra sign; this was an issue we had to work with in \cite{X4}. However, all of our maps will be morphisms of complex manifolds, and hence orientation-preserving.}
	
	We now come to the main cohomological result about these currents:
	\begin{lem}
		The complex
		\begin{equation} \label{eq:currres}
			\Omega^{w}_{X} \hookrightarrow  \mathcal{D}^{w,0}_{X} \xrightarrow{\overline{\partial}} \mathcal{D}^{w,1}_{X} \xrightarrow{\overline{\partial}} \ldots
		\end{equation}
		is an acyclic resolution of sheaves. Further, the map of $\overline{\partial}$-complexes induced by $\nu_X$
		\begin{equation} \label{eq:currform}
			\mathcal{A}^{w,\bullet}_{X} \to \mathcal{D}^{w,\bullet}_{X}
		\end{equation}
		is a quasi-isomorphism for each degree $w$.
	\end{lem}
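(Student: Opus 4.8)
The plan is to split the statement into its two halves and reduce everything to a single genuinely analytic input. \emph{Acyclicity} is free: each $\mathcal{D}^{w,q}_X$ is flabby, as recalled above, hence $\Gamma(X,-)$-acyclic (indeed acyclic for every left-exact functor, which is what is needed for the later equivariant applications). So the first assertion reduces to proving that the augmented complex of sheaves
\begin{equation*}
0 \longrightarrow \Omega^{w}_{X} \longrightarrow \mathcal{D}^{w,0}_{X} \xrightarrow{\ \overline{\partial}\ } \mathcal{D}^{w,1}_{X} \xrightarrow{\ \overline{\partial}\ } \cdots
\end{equation*}
is exact, i.e.\ to a $\overline{\partial}$-Poincaré lemma for currents. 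Granting that, the quasi-isomorphism in the second assertion follows formally: the classical Dolbeault--Grothendieck lemma gives that $0 \to \Omega^{w}_{X} \to \mathcal{A}^{w,\bullet}_{X}$ is likewise an exact complex of sheaves, while $\nu_X$ commutes with $\overline{\partial}$ (noted above) and with the two augmentations out of $\Omega^{w}_{X}$; hence $\nu_X$ induces the identity of $\Omega^{w}_{X}$ on $\mathcal{H}^0$ and $0\to 0$ on $\mathcal{H}^{>0}$, so it is a quasi-isomorphism.

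Thus the real content is the local exactness, which I would check on stalks, working on a polydisc $\Delta \subset \mathbb{C}^n$ with $n = \dim X$. In holomorphic degree $0$: writing a $\overline{\partial}$-closed current as $T = \sum_{|I| = w} T_I\, dz^I$ with distributional coefficients $T_I$, the equation $\overline{\partial} T = 0$ forces $\partial T_I/\partial \bar{z}_j = 0$ for every multi-index $I$ and every $j$, so Weyl's lemma (elliptic regularity for $\overline{\partial}$) makes each $T_I$ a holomorphic function and hence $T$ a holomorphic $w$-form; this identifies $\ker(\overline{\partial}\colon \mathcal{D}^{w,0}_{X} \to \mathcal{D}^{w,1}_{X})$ with $\Omega^{w}_{X}$ and settles exactness at the first two spots. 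For $q \ge 1$ I would prove that a $\overline{\partial}$-closed $T \in \mathcal{D}^{w,q}(\Delta)$ is $\overline{\partial}$-exact on a slightly smaller polydisc --- the current analogue of Dolbeault--Grothendieck --- using de Rham's regularization machinery from \cite{dR}: a family of smoothing operators $R_\epsilon$, assembled from local mollifier convolutions and a partition of unity, which commute with $\overline{\partial}$, converge weakly to the identity, and carry a homotopy $h_\epsilon$ with $R_\epsilon - \mathrm{id} = \overline{\partial} h_\epsilon + h_\epsilon \overline{\partial}$. Then for $\overline{\partial}$-closed $T$ one has $T = R_\epsilon T - \overline{\partial} h_\epsilon T$, and since $R_\epsilon T$ is a \emph{smooth} $\overline{\partial}$-closed $(w,q)$-form the smooth lemma writes $R_\epsilon T = \overline{\partial} S_\epsilon$, whence $T = \overline{\partial}(S_\epsilon - h_\epsilon T)$. (Alternatively one can transcribe the one-variable Cauchy-kernel proof of Dolbeault--Grothendieck to currents directly: convolution against the locally integrable kernel $\tfrac{1}{2\pi i\,\zeta}$ is defined on compactly supported currents, and the induction on the number of variables is unchanged.)

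The hard part will be exactly this current $\overline{\partial}$-Poincaré lemma in degrees $q \ge 1$: checking that $R_\epsilon$ and the homotopy $h_\epsilon$ respect the bigrading and commute with $\overline{\partial}$ (which is fine once the local convolutions are performed in holomorphic coordinates using a suitably subordinate partition of unity), and, in the alternative route, controlling the cutoff error terms in the inductive kernel argument at the level of currents. Both are classical --- carried out for the de Rham differential, and with the $\overline{\partial}$-case identical, in \cite{dR} --- and everything else (Weyl's lemma in degree $0$, and the reduction of the quasi-isomorphism statement to a formal comparison of resolutions of $\Omega^{w}_{X}$) is routine.
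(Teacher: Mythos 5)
Your overall reduction coincides with the paper's: flabbiness of the $\mathcal{D}^{w,q}_X$ gives acyclicity for free, the entire content is a $\overline{\partial}$-Poincar\'e (Dolbeault--Grothendieck) lemma for currents, and the quasi-isomorphism \eqref{eq:currform} then follows formally because $\nu_X$ commutes with $\overline{\partial}$ and with the augmentations, so that both complexes are acyclic resolutions of the same sheaf $\Omega^w_X$ (the smooth case being the classical lemma, which the paper cites to Serre). The difference is one of economy: the paper disposes of the key analytic input by citing Skoda for the current-level $\overline{\partial}$-Poincar\'e lemma, whereas you sketch proofs of it. Your degree-$0$ step is correct as stated: each coefficient $T_I$ is killed by every $\partial/\partial\bar z_j$, hence is harmonic, hence smooth by Weyl's lemma, hence holomorphic, identifying the kernel with $\Omega^w_X$. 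Your alternative route in degrees $q\ge 1$ --- transcribing the Cauchy-kernel induction to distribution coefficients, using that the solution operator is convolution against a locally integrable kernel --- is the standard correct argument and essentially reproves the cited result, so nothing is lost by not having the reference.

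One caveat on your first route, which is the only thin spot: de Rham's regularization machinery produces $R_\epsilon - \mathrm{id} = dA_\epsilon + A_\epsilon d$ for the full exterior derivative, with $A_\epsilon$ built from contractions along real translation flows; such contractions do not respect the bigrading, so asserting that the $\overline{\partial}$-case is \emph{identical} and that one gets a homotopy $h_\epsilon$ of bidegree $(0,-1)$ with $R_\epsilon - \mathrm{id} = \overline{\partial}h_\epsilon + h_\epsilon\overline{\partial}$ skips a genuine step: splitting $A_\epsilon$ into its bidegree components leaves a spurious $\partial$-term ($\partial A'_\epsilon + A'_\epsilon\partial$) that must be argued away, and this is not done in \cite{dR}, which treats only $d$. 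Since you only need exactness in bidegrees $(w,q)$ with $q\ge 1$ this can be repaired, but as written that sketch does not yet constitute a proof; your kernel argument (or the paper's citation) is the clean way through, and with it the proposal is complete and matches the paper's logical structure.
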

	\begin{proof}
		This is a consequence of the $\overline{\partial}$-Poincaré (or Dolbeault-Grothendieck) lemma due to \cite{Skoda} which implies the exactness of the \label{eq:currres} for sheaves of holomorphic forms/currents on any complex manifold (not relative to a base). The version for forms can be found in \cite{Serre}, which implies the quasi-isomorphism \eqref{eq:currform}.
	\end{proof}
	
	In particular, for $2$-dimensional $X$ and $w=2$, the complex 
	\[
	\mathcal{D}^{2,0}_{X} \xrightarrow{\overline{\partial}} \mathcal{D}^{2,1}_{X} \xrightarrow{\overline{\partial}} \mathcal{D}^{2,2}_{X}
	\]
	is functorial in $X$ (in the sense described previously) and computes the coherent cohomology of $\Omega^{2}_{X}$. (Here, we recall that $X$ is assumed to be relative dimension $2$, so there are no further terms.)
	
	Having proved this key cohomological property, we now introduce an important class of currents: associated to closed complex submanifolds $Z\subset X$ of codimension $r$, we have a closed \emph{current of integration}
	\[
	\delta_Z \in \mathcal{D}^{r,r}_X
	\]
	defined by  
	\[
	\delta_Z(\omega):=\int_Z \omega.
	\]
	Now consider a fixed squared elliptic fiber $T_\tau=E_\tau \times E_\tau$. Then the class of a closed current $\omega\in \mathcal{D}^{2, 1}_{T_\tau}$ having residue $\mathcal{C}\in H^0(T_\tau[c])$ along the residue map
	\[
	H^{1}(T_\tau-T_\tau[c], \Omega^2_{T_\tau - T_\tau[c]}) \to H^0(T_\tau[c])
	\]
	is equivalent to $d\omega = \delta_\mathcal{C}$ (where this is interpreted as a suitable linear combination of the currents of integration along points in the support of $\mathcal{C}$); see for example \cite[(3.3)]{X} from the author's thesis.
	
	Since the distributional Dolbeault complex is a functorial complex computing the coherent cohomology of holomorphic differentials, the coresponding \emph{equivariant} coherent cohomology of $\Omega^{2}_X$ can thus be computed by the double complex $C^\bullet(\Gamma, \mathcal{D}^{2,\bullet}_X)$, where $\Gamma\subset \GL_2(\Z)$ acts on the currents by pushforward. 
	
	Analogously to the non-equivariant case, then, an element $\omega$ of this double complex for $T$ restricts to a representative of a class in $H^{1}_\Gamma(T_\tau-T_\tau[c],\Omega^2_{T_\tau-T_\tau[c]})$ with residue 
	\[
	[T_\tau[c]-c^4\{0\}]\in H^0(T_\tau[c])^\Gamma
	\]
	if and only if the total differential of $\omega$ is $\delta_{T_\tau[c]}-c^4\delta_0 \in C^0(\Gamma, \mathcal{D}^{2,2}_{T_\tau})$. In particular, if we can find such a class which is in the \emph{trace-fixed} part $(\mathcal{D}^{2,\bullet}_{T_\tau})^{(0)}$, this notation meaning the part which is invariant by $[a]_*$ for all integers $a$ relatively prime to $c$, then it will represent the class
	\[
	Z_\tau^{(c)} \in H^{1}_\Gamma(T_\tau-T_\tau[c], \Omega^{2}_{T_\tau-T_\tau[c]}).
	\]
	
	\section{Constructing the lift} \label{section:formulas}
	\subsection{Kronecker-Eisenstein series as theta functions}
	
	We now know that in principle, one can compute Eisenstein classes by finding suitable lifts inside the distributional de Rham complex. It remains to find currents realizing these lifts.
	
	Recall the Kronecker-Eisenstein series \cite{Weil}
	\begin{equation} \label{eq:kroeis}
		K_a(s,\tau, z, u) =\sum_{\omega \in \Z+\Z \tau}' \frac{(\overline{\omega+z})^a}{|\omega+z|^{2s}} \exp\left(2\pi i\frac{\omega\overline{u}-\overline{\omega}u}{\tau - \overline{\tau}}\right)
	\end{equation}
	for $\tau \in \mathcal{H}$, $z,u\in \C$, and $a\in \Z$; the superscript apostrophe denotes that the sum omits any term where $\omega+z=0$. This series is convergent for $\text{Re}\, s>1+a/2$, but has meromorphic continuation to all $s\in \C$, with simple poles possibly only at $s=0$ (when $z\in \Z + \Z \tau$ and $a=0$) $s=1$ (when $u\in \Z + \Z \tau$ and $a=0$). We define special notations, following \cite[\S9]{BCG}, for the specializations we need:
	\begin{align}
		E_1(\tau,z) &:= \frac{i}{2\pi} K_1(1,\tau,z,0), \\
		E_2(\tau,z) & := -\frac{1}{4 \pi y} K_2(1,\tau,z,0).
	\end{align}
	Since these functions are manifestly $(\Z+\Z\tau)$-periodic in $z$ for fixed $\tau$, we can consider $E_1(\tau,z)$ and $E_2(\tau,z)$ as functions on $E_\tau$. Further, $E_1(\tau, z)$ is odd and $E_2(\tau, z)$ is even in $z$, for any $\tau$; in particular, we see that $E_1(\tau,0)=0$. As noted in \cite{Weil}, these two functions specialize at torsion points 
	\[
	E_1(\tau, \alpha-\beta\tau),E_2(\tau, \alpha-\beta\tau); (\alpha,\beta) \in (\Q/\Z)^2 \setminus \{(0,0)\}
	\]
	to holomorphic modular forms in $\tau$ of weight $1$, respectively $2$, which are the classical Eisenstein series of those weights, of level corresponding to the stabilizer of $(\alpha,\beta)$ in $\SL_2(\Z)$.
	
	From \cite[(9.6)]{BCG} together with \cite[Proposition 20]{BCG}, we observe that we have the distribution relations as \emph{functions}
	\begin{align} \label{eq:distro}
		[a]_* (E_1(\tau,z)\, dz)& = E_1(\tau,z)\, dz ,\\
		[a]_*E_2(\tau,z) & = E_2(\tau, z)
	\end{align}
	for any integer $a\in \Z\setminus\{0\}$. Further, \cite[Theorem 19]{BCG} and \cite[\S9.4]{BCG} together imply that, considered as a current on $E_\tau$, 
	\[
	\overline{\partial}E_1(\tau,z)\,dz = \delta_0 -\text{vol}_{E_\tau} = \delta_0 -\frac{2 i dz\wedge d\overline{z}}{y}
	\]
	where here we give the formula for the normalized volume form on $E_\tau$. Away from the zero section (ignoring the current of integration $\delta_0$), this derivative holds on the level of functions. Furthermore, \cite[(35)]{Weil} says that
	\[
	\overline{\partial} E_2(\tau,z) = \frac{i}{2y}\cdot E_1(\tau, z)\, d\overline{z}
	\]
	as functions (and hence currents) on $E_\tau$. Thus, both $E_1$ and $E_2$ are smooth away from the zero section, where $E_2$ is once-differentiable and $E_1$ has a singularity. Note that the formulas in \cite[\S9.4]{BCG} imply that on a punctured neighborhood of the zero section, $E_1$ looks like it has a simple log pole (even though its actual value at zero is $0$). It therefore makes sense to consider forms like $E_1(\tau, z)\,dz$ and $E_2(\tau, z)\, dz$ as $(1,0)$-currents on $E_\tau$ by considering them as kernels of integration as in the map $\nu_X$ we defined for smooth forms, since the corresponding integrals will converge for compactly supported smooth forms.
	
	\subsection{Explicit isogeny-fixed currents}
	
	We now construct a certain element of total degree $1$ (or $3$, if one adds in the Hodge weight $2$) 
	\[
	\zeta_\tau \in C^\bullet(\GL_2(\Q), (\mathcal{D}_{T_\tau}^{2,\bullet})^{(0)})
	\]
	whose total differential is the cocycle
	\[
	\delta_0-\text{vol}_{T_\tau}\in Z^0(\Gamma,(\mathcal{D}_{T_\tau}^{2,2})^{(0)})=[(\mathcal{D}_{T_\tau}^{2,\bullet})^{(0)}]^\Gamma
	\]
	
	\begin{lem}
		If we construct such a $\zeta_\tau$, then for any integer $c>1$, the restriction of $([c]^*-c^4)\zeta_\tau$ to any $\Gamma\subset \GL_2(\Z)$ and $T_\tau-T_\tau[c]\subset T_\tau$ represents the class $Z_\tau^{(c)}$.
	\end{lem}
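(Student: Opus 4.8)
The plan is to check that $([c]^{*}-c^{4})\zeta_{\tau}$, after restriction to $\Gamma$ and to $T_{\tau}-T_{\tau}[c]$, satisfies the criterion recorded just above the statement: a strictly trace-fixed element of the double complex $C^{\bullet}(\Gamma,\mathcal{D}^{2,\bullet}_{T_{\tau}})$ represents $Z^{(c)}_{\tau}$ exactly when its total differential equals $\delta_{T_{\tau}[c]}-c^{4}\delta_{0}$. So there are two things to verify: that $[c]^{*}-c^{4}$ preserves strict trace-fixedness and commutes with the total differential $D$, and that it carries $D\zeta_{\tau}=\delta_{0}-\mathrm{vol}_{T_{\tau}}$ to $\delta_{T_{\tau}[c]}-c^{4}\delta_{0}$.

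For the first point, $[c]$ is multiplication by the central scalar $c$ on each of the two elliptic factors, hence a finite étale self-map of $T_{\tau}$ of degree $c^{4}$; it commutes with the $\GL_{2}(\Q)$-action on $\mathcal{D}^{2,\bullet}_{T_{\tau}}$ (scalars being central) and with every prime-to-$c$ isogeny $[a]_{*}$ (this follows from $[c]\circ[a]=[a]\circ[c]$ and the functoriality of currents). Consequently flat pullback $[c]^{*}$ is defined on currents, commutes with $\overline{\partial}$ and with the group-cochain differential and hence with $D$, and preserves the trace-fixed part $(\mathcal{D}^{2,\bullet}_{T_{\tau}})^{(0)}$ — as does multiplication by $c^{4}$. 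Since $\zeta_{\tau}$ lies in $C^{\bullet}(\GL_{2}(\Q),(\mathcal{D}^{2,\bullet}_{T_{\tau}})^{(0)})$, so does $([c]^{*}-c^{4})\zeta_{\tau}$, and the same holds after restricting the group along $\Gamma\subset\GL_{2}(\Z)$ and the currents to the open $T_{\tau}-T_{\tau}[c]$. It is the strictness of this trace-fixedness that will upgrade the resulting class to the refined, isogeny-fixed $Z^{(c)}_{\tau}$, rather than one isogeny-fixed only up to coboundary.

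For the second point, $D\big(([c]^{*}-c^{4})\zeta_{\tau}\big)=([c]^{*}-c^{4})D\zeta_{\tau}=([c]^{*}-c^{4})(\delta_{0}-\mathrm{vol}_{T_{\tau}})$, so it remains to compute the two pullbacks. As $[c]$ is étale with $[c]^{-1}(0)=T_{\tau}[c]$, flat pullback of the point-mass current gives $[c]^{*}\delta_{0}=\delta_{T_{\tau}[c]}$, the sum of the currents of integration over the $c^{4}$ points of $T_{\tau}[c]$; and since $[c]^{*}dz_{j}=c\,dz_{j}$ for $j=1,2$, the normalized volume form $\mathrm{vol}_{T_{\tau}}=\mathrm{vol}_{E_{\tau}}\wedge\mathrm{vol}_{E_{\tau}}$ pulls back to $c^{2}\cdot c^{2}\,\mathrm{vol}_{T_{\tau}}=c^{4}\,\mathrm{vol}_{T_{\tau}}$. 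Hence $([c]^{*}-c^{4})(\delta_{0}-\mathrm{vol}_{T_{\tau}})=(\delta_{T_{\tau}[c]}-c^{4}\,\mathrm{vol}_{T_{\tau}})-c^{4}(\delta_{0}-\mathrm{vol}_{T_{\tau}})=\delta_{T_{\tau}[c]}-c^{4}\delta_{0}$, i.e. the current of integration along the $0$-cycle $T_{\tau}[c]-c^{4}\{0\}$. Restricted to $T_{\tau}-T_{\tau}[c]$ this differential vanishes, so $([c]^{*}-c^{4})\zeta_{\tau}|_{T_{\tau}-T_{\tau}[c]}$ is a cocycle whose residue along $T_{\tau}[c]$ is precisely $[T_{\tau}[c]-c^{4}\{0\}]$; by the quoted criterion it therefore represents $Z^{(c)}_{\tau}$ for every $\Gamma\subset\GL_{2}(\Z)$.

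I do not expect any serious obstacle: the calculation is short, and the only care needed is bookkeeping — that flat pullback of currents by $[c]$ is legitimate and commutes with $D$ and with the $\GL_{2}(\Q)$- and prime-to-$c$-isogeny actions (so that both the restriction to $\Gamma$ and the passage to the refined class $Z^{(c)}_{\tau}$ are justified), and that $[c]^{*}\mathrm{vol}_{T_{\tau}}=c^{4}\,\mathrm{vol}_{T_{\tau}}$ comes out with the right power of $c$, so that the $c$-smoothing converts $\delta_{0}-\mathrm{vol}_{T_{\tau}}$ — which mixes a point mass with a volume form and so is not itself a legitimate residue for the $T_{\tau}[c]$-picture — into the genuine residue $\delta_{T_{\tau}[c]}-c^{4}\delta_{0}$ supported on $T_{\tau}[c]$. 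The real work lies instead in the subsequent construction of $\zeta_{\tau}$.
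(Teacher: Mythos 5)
Your proof is correct and follows essentially the same route as the paper: apply $([c]^{*}-c^{4})$ to the identity $D\zeta_{\tau}=\delta_{0}-\mathrm{vol}_{T_{\tau}}$, use $[c]^{*}\delta_{0}=\delta_{T_{\tau}[c]}$ and $[c]^{*}\mathrm{vol}_{T_{\tau}}=c^{4}\mathrm{vol}_{T_{\tau}}$ so the volume terms cancel, note that strict trace-fixedness is preserved, and invoke the criterion stated at the end of the previous section. The paper's own proof is just a condensed version of this; your extra bookkeeping (flat pullback commuting with $D$, the $\Gamma$-action, and prime-to-$c$ isogenies) is exactly what that condensed argument implicitly uses.
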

	\begin{proof}
		If the total differential of $\zeta_\tau$ is $\delta_0-\text{vol}_E$, then the total differential of $([c]^*-c^4)\zeta_\tau$ is $\delta_{T_\tau[c]}-c^4\delta_0$. Furthermore, it remains invariant by all isogenies prime to $c$; hence by the discussion at the end of the previous section, the result follows.
	\end{proof}
	
	To construct this lift, we must specify $\zeta_\tau^{0,1}\in C^0(\GL_2(\Q),(\mathcal{D}_{T_\tau}^{2,1})^{(0)})$ and $\zeta_\tau^{1,0} \in C^1(\GL_2(\Q),(\mathcal{D}_{T_\tau}^{2,0})^{(0)})$ such that $\overline{\partial} \zeta_\tau^{1,0} = \partial_{\GL_2(\Q)} \zeta_{\tau^{0,1}}$ where the latter differential refers to the group coboundary map.
	
	We therefore fix the choice 
	\[
	\zeta_\tau^{0,1} := E_1(\tau,z_1)\,dz_1 \, \delta_{z_2=0} + E_1(\tau, z_2)\,dz_2\wedge \text{vol}_{z_1=0}
	\]
	whose image under $\overline{\partial}$ is can be computed as
	\begin{align}
		\overline{\partial} \zeta_\tau^{0,1}&= \overline{\partial} [E_1(\tau,z_2)\,dz_2 \,\delta_{z_2=0} + E_1(\tau, z_1)\,dz_1\wedge \text{vol}_{z_1=0}] \\
		&= (\delta_{0} - \text{vol}_{z_1=0} \, \delta_{z_2=0}) + ( \text{vol}_{z_1=0} \, \delta_{z_2=0} - \text{vol}_{z_1=0} \wedge \text{vol}_{z_2=0} ) \\
		&= \delta_0-\text{vol}_{T_\tau}
	\end{align}
	
	It remains therefore to find, for each $\gamma\in \GL_2(\Q)$, a lift to $(\mathcal{D}_{T_\tau}^{2,0})^{(0)}$ of
	\[
	(\gamma_*-1) \zeta_\tau^{0,1}= (\gamma_*-1)[E_1(\tau,z_2)\,dz_2 \,\delta_{z_2=0} + E_1(\tau, z_1)\,dz_1\wedge \text{vol}_{z_1=0}].
	\]
	We observe that there are no more choices to be made; these lifts are unique:
	\begin{lem}
		The trace-fixed complex
		\[
		0\to (\mathcal{D}^{2,0}_{T_\tau})^{(0)} \to (\mathcal{D}^{2,1}_{T_\tau})^{(0)} \to (\mathcal{D}^{2,2}_{T_\tau})^{(0)}
		\]
		is left-exact.
	\end{lem}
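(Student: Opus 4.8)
The plan is to deduce the statement from the resolution lemma proved above together with the elementary behaviour of the isogenies $[a]_*$ on the Hodge pieces of $T_\tau$. First I would use that lemma to identify (the global sections of) the complex $\mathcal{D}^{2,0}_{T_\tau}\to\mathcal{D}^{2,1}_{T_\tau}\to\mathcal{D}^{2,2}_{T_\tau}$ as an acyclic resolution computing $H^\bullet(T_\tau,\Omega^2_{T_\tau})$. Since $T_\tau=E_\tau\times E_\tau$ is a two-dimensional complex torus, the Dolbeault isomorphism together with the computation of Dolbeault cohomology of a torus identifies $H^q(T_\tau,\Omega^2_{T_\tau})$ with the translation-invariant forms $\Lambda^2V^*\otimes\Lambda^q\overline{V}^*$, where $V=\C^2$ carries the fibre coordinates $z_1,z_2$; in particular $\ker(\overline{\partial}\colon\mathcal{D}^{2,0}_{T_\tau}\to\mathcal{D}^{2,1}_{T_\tau})=H^0(T_\tau,\Omega^2_{T_\tau})$ is the line $\C\,\omega_0$ with $\omega_0:=dz_1\wedge dz_2$, while $H^1(T_\tau,\Omega^2_{T_\tau})$ is two-dimensional.

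Next I would record the action of $[a]_*$ for $a$ prime to $c$. Because $[a]$ lifts to diagonal multiplication by $a$ on $\C^2$, the pullback $[a]^*$ scales $\Lambda^2V^*\otimes\Lambda^q\overline{V}^*$ by $a^{2+q}$; combining this with the projection formula $[a]_*[a]^*=\deg[a]=a^4$ shows that $[a]_*$ acts on $H^q(T_\tau,\Omega^2_{T_\tau})$ as the scalar $a^{2-q}$ — multiplication by $a^2$ on $H^0$ and by $a$ on $H^1$ — and an analogous change-of-variables computation gives $[a]_*\,\nu_{T_\tau}(\omega_0)=a^2\,\nu_{T_\tau}(\omega_0)$ on the level of currents. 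Fixing once and for all an integer $a\ge 2$ coprime to $c$ (which exists since $c$ is finite), all the scalars in sight are $\ne 1$; this is the only input needed.

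Left-exactness at the first spot is then immediate: if $\alpha\in(\mathcal{D}^{2,0}_{T_\tau})^{(0)}$ is $\overline{\partial}$-closed, then $\alpha\in\C\,\omega_0$ by the resolution property, while being trace-fixed forces $(a^2-1)\alpha=[a]_*\alpha-\alpha=0$, so $\alpha=0$. For exactness in the middle, let $\xi\in(\mathcal{D}^{2,1}_{T_\tau})^{(0)}$ with $\overline{\partial}\xi=0$. Its class in $H^1(T_\tau,\Omega^2_{T_\tau})$ is fixed by $[a]_*$, i.e., by multiplication by $a\ne 1$, hence vanishes, so $\xi=\overline{\partial}\xi_0$ for some $\xi_0\in\mathcal{D}^{2,0}_{T_\tau}$ which is not a priori trace-fixed.

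The only genuinely non-formal step is to correct $\xi_0$. For each $a$ coprime to $c$ one has $\overline{\partial}([a]_*\xi_0-\xi_0)=[a]_*\xi-\xi=0$, so $[a]_*\xi_0-\xi_0=\lambda_a\,\omega_0$ for a scalar $\lambda_a\in\C$; from $[ab]_*=[a]_*[b]_*$ and $[a]_*\omega_0=a^2\omega_0$ one gets the cocycle identity $\lambda_{ab}=\lambda_a+a^2\lambda_b$, and comparing it with the version obtained by interchanging $a$ and $b$ forces $\lambda_a/(1-a^2)$ to be independent of $a$. Taking $\mu$ to be this common value, the current $\xi_0+\mu\,\omega_0$ lies in $(\mathcal{D}^{2,0}_{T_\tau})^{(0)}$ and still satisfies $\overline{\partial}(\xi_0+\mu\,\omega_0)=\xi$, which is what was wanted. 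I expect the only real hazard here to be bookkeeping — keeping straight the normalizations of $[a]_*$ versus $[a]^*$ and the degree $a^4$ of $[a]$ on $T_\tau$ — since all the structural content is in the fact that the relevant eigenvalues $a^2$ and $a$ are different from $1$.
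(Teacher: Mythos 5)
Your proposal is correct and takes essentially the same route as the paper: the paper's one-line proof rests on exactly the fact you establish --- that $[a]_*$ acts on $H^q(T_\tau,\Omega^2_{T_\tau})$ by $a^{2-q}$, so the trace-fixed cohomology vanishes in degrees $0$ and $1$ --- and delegates the remaining formal step to \cite[Lemma 6.2.1]{SV}, which your explicit correction $\xi_0\mapsto\xi_0+\lambda_a/(1-a^2)\,\omega_0$ carries out by hand. The only detail left implicit (that the corrected lift is also fixed by $[\pm1]_*$) follows immediately from $\lambda_1=2\lambda_{-1}=0$, so there is no real gap.
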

	\begin{proof}
		This can be proven identically to \cite[Lemma 6.2.1]{SV}, as the only trace-fixed cohomology of $\Omega^2_{T_\tau}$ is in top degree, coming from the form yielding the fundamental class of the $4$-torus $T_\tau$ in the Hodge-de Rham spectral sequence.\footnote{In fact, one can show using Fourier series that this complex is almost right-exact as well, except for one dimension of cohomology on the right. This is unnecessary for us, so we omit it.}
	\end{proof}
	
	To find formulas for these lifts for a fully general matrix $\gamma$ is complicated if approached directly: the pushforward action on currents can yield expressions with arbitrarily many terms. Instead, we decompose our matrices to simplify the calculation. 
	
	\subsection{Telescoping with the Bruhat decomposition} 
	
	The main observation we need is that we have the ``telescoping'' relation
	\begin{equation} \label{eq:recur}
		(\gamma_2\gamma_1-1) \zeta^{0,1}_{T_\tau} = \gamma_2(\gamma_1-1) \zeta^{0,1}_{T_\tau} + (\gamma_2-1)\zeta^{0,1}_{T_\tau}
	\end{equation}
	reducing the problem of finding a lift for the product of matrices to the problem for the individual matrices. Thus, the problem of finding lifts can be reduced to a set of generators of $\GL_2(\Q)$.
	
	We recall that the Bruhat decomposition says that 
	\[
	\GL_2(\Q) = BwB = \begin{pmatrix} * & * \\ 0 & * \end{pmatrix}\begin{pmatrix} 0 & 1 \\ 1 & 0 \end{pmatrix} \begin{pmatrix} * & * \\ 0 & * \end{pmatrix}
	\]
	where $B$ denotes the upper triangular Borel subgroup, and $w$ the antidiagonal Weyl element. We can make this more explicit by writing
	\begin{align}
		\gamma := \begin{pmatrix} a & b \\ c & d \end{pmatrix} &= \begin{pmatrix}
			1 & a \\ 0 & c
		\end{pmatrix} \begin{pmatrix}
			0& 1 \\ 1 & 0
		\end{pmatrix} \begin{pmatrix}
			1& d/c \\ 0 & - \frac{\det \gamma}{c}
		\end{pmatrix} \\ 
		&= \begin{pmatrix}
			1 & a/c \\ 0 & 1
		\end{pmatrix} \begin{pmatrix}
			1 &0 \\ 0 & c
		\end{pmatrix} \begin{pmatrix}
			0& 1 \\ 1 & 0
		\end{pmatrix} \begin{pmatrix}
			1& -\frac{d}{\det \gamma} \\ 0 & 1
		\end{pmatrix}
		\begin{pmatrix}
			1& 0 \\ 0 & -\frac{\det \gamma}{c}
		\end{pmatrix}
	\end{align}
	whenever $c\ne 0$, for an arbitrary matrix $\gamma \in \GL_2(\Q)$. Here, we have further decomposed the appearing upper triangular matrices by factoring them into a diagonal times a unipotent matrix. When $c=0$, our matrix is already in the Borel, so we can directly write
	\[
	\begin{pmatrix} a & b \\ 0 & d \end{pmatrix} = \begin{pmatrix} 1 & b/d \\ 0 & 1 \end{pmatrix} \begin{pmatrix} a & 0 \\ 0 & d \end{pmatrix} 
	\]
	
	The point of this is that $w$, diagonal, and unipotent matrices all act in very computable ways on $\zeta^{0,1}_{T_\tau}$, enabling us to find lifts:
	\begin{enumerate}
		\item All diagonal matrices act trivially on $\zeta_\tau^{0,1}$, by the isogeny properties of $E_1$ we proved earlier together with the fact that the volume form of a torus is isogeny-invariant. Hence the corresponding lifts are zero.
		\item For the element $w$, we can write $(w-1)\zeta_{T_\tau}$ as
		\[
		E_1(\tau,z_2)\,dz_2 \,\delta_{z_1=0} + E_1(\tau, z_1)\,dz_1\wedge \text{vol}_{z_1=0} - E_1(\tau,z_1)\,dz_1 \,\delta_{z_2=0} - E_1(\tau, z_2)\,dz_2\wedge \text{vol}_{z_2=0} 
		\]
		\[
		= \overline{\partial}[E_1(\tau,z_1)E_1(\tau,z_2)\,dz_1\wedge dz_2]
		\]
		which is a very simple lift.
		\item For a unipotent matrix $\gamma_u$ with upper-right entry $u$, we find that the term with the current $\delta_{z_2=0}$ cancels in $(\gamma_u-1)\zeta_{T_\tau}$ (since the coordinate $z_1$ is fixed), and what remains is
		\begin{equation} \label{eq:ubar}
			\frac{2 i u}{y} E_1(\tau,z_2)\,dz_1 \wedge dz_2  \wedge d\overline{z_2}= \overline{\partial} \left[4 u E_2(\tau,z_2) \, dz_1 \wedge dz_2\right]
		\end{equation}
	\end{enumerate}
	
	Combining these with our explicit Bruhat decomposition, we find the following formulas: in the case when $c=0$, we obtain
	\begin{equation}
		\zeta^{1,0}_{T_\tau}(\gamma) = \frac{4b}{d} E_2(\tau, z_2)\, dz_1\wedge dz_2.
	\end{equation}
	In the case when $c\ne 0$, we use the full Bruhat decomposition to obtain  the following:
	
	\begin{prop} \label{prop:lift}
		The unique lift $\zeta^{1,0}_{T_\tau}$ on a matrix $\gamma \in \GL_2(\Q)$ is given by the expression
		\begin{equation} \label{eq:gammaform}
			\left[\begin{pmatrix}a& 1 \\ c& 0 \end{pmatrix}_* \frac{4d}{c\det \gamma} E_2(\tau, z_2) + \frac{1}{c}\begin{pmatrix}1& a \\ 0& c \end{pmatrix}_* E_1(\tau, z_1)E_1(\tau, z_2) + \frac{4a}{c} E_2(\tau, z_2)\right]\, dz_1\wedge dz_2.
		\end{equation}
	\end{prop}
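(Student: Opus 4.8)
The plan is to combine the telescoping identity \eqref{eq:recur} with the explicit two-line Bruhat factorization displayed above; the case $c=0$ was already disposed of, so assume $c\neq 0$ and write $\gamma = u_1 h_1 w u_2 h_2$ with $u_1 = \left(\begin{smallmatrix}1 & a/c\\ 0 & 1\end{smallmatrix}\right)$, $h_1 = \left(\begin{smallmatrix}1 & 0\\ 0 & c\end{smallmatrix}\right)$, $u_2 = \left(\begin{smallmatrix}1 & -d/\det\gamma\\ 0 & 1\end{smallmatrix}\right)$, $h_2 = \left(\begin{smallmatrix}1 & 0\\ 0 & -\det\gamma/c\end{smallmatrix}\right)$. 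Iterating \eqref{eq:recur} over these five factors expresses $(\gamma_*-1)\zeta^{0,1}_{T_\tau}$ as $\sum_i (g_1\cdots g_{i-1})_*\big((g_{i,*}-1)\zeta^{0,1}_{T_\tau}\big)$, the empty product being the identity. Since pushforward of currents along the holomorphic maps underlying the $\GL_2(\Q)$-action is a chain map for $\overline{\partial}$, if $L_g$ denotes any lift of $(g_*-1)\zeta^{0,1}_{T_\tau}$ to $\mathcal{D}^{2,0}_{T_\tau}$, then
\[
L_{u_1} + (u_1 h_1)_* L_w + (u_1 h_1 w)_* L_{u_2}
\]
is a lift of $(\gamma_*-1)\zeta^{0,1}_{T_\tau}$ (the two diagonal factors give $L_{h_i}=0$ and drop out). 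That this lift is unique within the trace-fixed complex is exactly the left-exactness Lemma proved above, so everything reduces to evaluating this expression and matching it against \eqref{eq:gammaform}.

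For the three building-block lifts I will quote the computations itemized immediately before the statement: diagonal matrices act trivially on $\zeta^{0,1}_{T_\tau}$, so $L_{h_i}=0$; the Weyl element gives $L_w = E_1(\tau,z_1)E_1(\tau,z_2)\,dz_1\wedge dz_2$; and a unipotent matrix with upper-right entry $u$ has lift $4u\,E_2(\tau,z_2)\,dz_1\wedge dz_2$, whence $L_{u_1} = \tfrac{4a}{c}\,E_2(\tau,z_2)\,dz_1\wedge dz_2$ and $L_{u_2} = -\tfrac{4d}{\det\gamma}\,E_2(\tau,z_2)\,dz_1\wedge dz_2$. The two partial products are immediate matrix multiplications: $u_1 h_1 = \left(\begin{smallmatrix}1 & a\\ 0 & c\end{smallmatrix}\right)$ and $u_1 h_1 w = \left(\begin{smallmatrix}a & 1\\ c & 0\end{smallmatrix}\right)$.

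The only step that is not purely formal is the normalization of the pushforward of a $(2,0)$-current along a self-map $f$ of $T_\tau$ given by a matrix: writing $f$ in the coordinates $(z_1,z_2)$ and summing over its local sections, one finds $f_*\big(g\,dz_1\wedge dz_2\big) = (\det f)^{-1}\,(f_*g)\,dz_1\wedge dz_2$, where $f_*g$ is the trace of the function $g$ over the fibers. With $\det\left(\begin{smallmatrix}1 & a\\ 0 & c\end{smallmatrix}\right) = c$ and $\det\left(\begin{smallmatrix}a & 1\\ c & 0\end{smallmatrix}\right) = -c$ this produces the factors $\tfrac1c$ and $-\tfrac1c$; the latter combines with the scalar $-\tfrac{4d}{\det\gamma}$ in $L_{u_2}$ to give the coefficient $\tfrac{4d}{c\det\gamma}$, and summing the three contributions reproduces \eqref{eq:gammaform} verbatim. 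Finally, the resulting current lies in $(\mathcal{D}^{2,0}_{T_\tau})^{(0)}$ by the distribution relations \eqref{eq:distro} for $E_1(\tau,z)\,dz$ and $E_2(\tau,z)$, together with the fact that the scalar isogenies $[m]$ are central in $M_2(\Z)$, so their pushforwards commute with those of the Bruhat factors. The main obstacle, such as it is, is bookkeeping — keeping the Jacobian factors, the sign conventions in $f_*(g\,dz_1\wedge dz_2)$, and the function-versus-form distinction consistent across the three terms — since conceptually nothing is needed beyond the building-block lifts and the left-exactness Lemma, both already in hand.
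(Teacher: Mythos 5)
Your proposal is correct and is essentially the paper's own argument: the formula is obtained by telescoping \eqref{eq:recur} over the five-factor Bruhat decomposition, quoting the three building-block lifts (diagonal, Weyl, unipotent), invoking the left-exactness lemma for uniqueness, and converting form-pushforwards to function-pushforwards via the Jacobian factors $\det\left(\begin{smallmatrix}1&a\\0&c\end{smallmatrix}\right)=c$ and $\det\left(\begin{smallmatrix}a&1\\c&0\end{smallmatrix}\right)=-c$, exactly as the paper's remark about determinant factors indicates. Your sign bookkeeping reproduces \eqref{eq:gammaform} verbatim, so nothing further is needed.
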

	
	Notice that here, we have to introduce factors coming from the determinants of the matrices to go from pushforwards of forms to pushforwards of functions.
	
	We therefore have obtained full formulas for $\zeta_{T_\tau}$, whose $c$-stabilizations yield the kernel classes $Z_\tau^{(c)}$ for the Eisenstein theta lift after restriction to $\GL_2(\Z)$.
	
	For a general matrix $\gamma$, there does not appear to be a substantial further simplification of the preceding formulas, besides writing out the pushforwards as sums.
	
	\subsection{Specializations at torsion points}
	
	The actual ``theta lift'' for $(\GL_2,\GL_2)$ is generally taken to be valued after contraction pullback by torsion sections. Fix an arithmetic subgroup $\Gamma\subset \GL_2(\Z)$; in order to compare our results with the cohomological theta lift, we will restrict $\zeta_\tau$ to $\Gamma$.
	
	\begin{rem}
		This arithmeticity hypothesis is only necessary to compare with the cohomological construction of the theta lift: one can still obtain pulled-back cocycles on, say, $S$-arithmetic subgroups for some set of inverted places $S$, so long as they fix some torsion sections (of order necessarily prime to the places in $S$). We do not write down this extension here, but the formulas can be obtained by our same methods.
	\end{rem}
	
	In this context, we wish to consider the image of $Z_\tau^{(c)}$ under the composite
	\begin{equation} \label{eq:comp}
		H^1_\Gamma(T_\tau-T_\tau[c], \Omega^2_{T_\tau-T_\tau[c]}) \xrightarrow{\cap \frac{\partial}{\partial z_1}\otimes \frac{\partial}{\partial z_2}} H^1_\Gamma(T_\tau-T_\tau[c], \mathcal{O}_{T_\tau-T_\tau[c]}) \xrightarrow{D^*} H^1_\Gamma(\{\tau\}, \C) = H^1(\Gamma, \C)
	\end{equation}
	where $D$ is any $\Gamma$-fixed torsion cycle $D$ disjoint from $T_\tau[c]$. (Here, we are slightly abusive in writing the pullback $D^*$; this is actually a sum of pullbacks over the various torsion sections in the support of $D$.) Analogous to the case of a single torsion section, we will write $\Theta_{\tau,D}^{(c)}$ for this image.
	
	We wish to interpret this composite in terms of the explicit double complex representative $\zeta_\tau$; the main issue is that currents cannot in general be pulled back by closed immersions.
	
	The technical tool we need to remedy this is the introduction of a variant of the distributional Dolbeault complex: For any $\Gamma\subset \GL_2(\Q)$, let $H_\Gamma\subset T_\tau$ be the $\Gamma$-orbit of the lines $\{z_1=0\}$ and $\{z_2=0\}$ and their translates by $c$-torsion sections inside $T_\tau$; this is a union of infinitely many elliptic subschemes (and their $c$-torsion translates). For any finite subarrangement $H\subset H_\Gamma$, we define a complex $\mathcal{D}_{T_\tau,H}^{2,\bullet}$ via the pullback square 
	\begin{equation}
		\begin{tikzcd}
			\mathcal{D}_{T_\tau,H}^\bullet \arrow[r] \arrow[d] & \mathcal{D}_{T_\tau}^{2,\bullet} \arrow[d]\\
			\mathcal{A}_{T_\tau-H}^{2,\bullet} \arrow[r] & \mathcal{D}_{T_\tau-H}^{2,\bullet}
		\end{tikzcd}
	\end{equation}
	which results in an identification of $\mathcal{D}_{T_\tau,H}^{2,i}$ with the $(2,i)$-currents such that their restriction to $T_\tau-H$ are given by smooth $(2,i)$-forms. Here, the bottom horizontal map is the earlier-defined inclusion, and the right vertical map is the restriction dual to the pushforward of compactly-supported differential forms. We define then
	\[
	\mathcal{D}_{T_\tau,H_\Gamma}^{2,\bullet}:= \varinjlim_H \mathcal{D}_{T_\tau,H}^{2,\bullet} 
	\]
	where the limit runs over $H$ finite subarrangements of $H_\Gamma$, along the natural inclusion maps. The group $\Gamma$ permutes the pullback diagrams for each $H$ (sending it to that of $\gamma H$), and these assemble to give a pushforward action on $\mathcal{D}_{T_\tau,H_\Gamma}^{2,\bullet}$. Further, because the bottom row in each pullback diagram is a quasi-isomorphism, we see that $\mathcal{D}_{T_\tau,H_\Gamma}^{2,\bullet}$ computes the cohomology of $\Omega^2_{T_\tau}$, just as $\mathcal{D}_{T_\tau}^{2,\bullet}$ does. Furthermore, analogously to the full distributional de Rham complex, we have a left exact sequence
	\[
	0 \to (\mathcal{D}_{T_\tau,H_\Gamma}^{2,0})^{(0)}\to (\mathcal{D}_{T_\tau,H_\Gamma}^{2,1})^{(0)}\to (\mathcal{D}_{T_\tau,H_\Gamma}^{2,2})^{(0)}
	\]
	meaning that $\zeta_\tau$, considered in this more refined complex, is still uniquely determined. The important new phenomenon for us is that if $\Gamma$ fixes the torsion cycle $D\subset T_\tau$ disjoint from $H_\Gamma$, then there is a composite pullback map
	\[
	\mathcal{D}_{T_\tau,H_\Gamma}^{2,0}\to \varinjlim_H \Omega^2_{T_\tau-H} \xrightarrow{\cap \frac{\partial}{\partial z_1}\otimes \frac{\partial}{\partial z_2}} \mathcal{O}_{T_\tau-H} \xrightarrow{D^*} \C
	\]
	which induces the composite \eqref{eq:comp}.
	
	We now concern ourselves with the case that $D$ is supported on $N$-torsion for an integer $N>1$. From the preceding discussion, we can conclude that 
	\[
	[D^* ([c]^*-c^4) \zeta_\tau] = \Theta_{\tau, D}^{(c)}
	\]
	so long as $D$ is disjoint from $H_\Gamma$. Note that if $c\equiv 1\pmod{N}$, we can write the left-hand side as 
	\[(1-c^4)[D^*\zeta_\tau].\]
	
	The restriction on $D$ is rather irritating, as it depends on the arbitrary choice of coordinates $z_1,z_2$ we used to choose our lift $\zeta_\tau^{0,1}$: for any given torsion section, we could simply start with a different lift to obtain formulas for the pullback. However, this would result in a somewhat unsatisfying lack of unity in our formulas. Luckily, we can use a trick to bypass this issue entirely, and make the formulas valid even for ``bad'' torsion sections:
	
	\begin{thm} \label{thm:inj}
		Suppose $c\equiv 1\pmod{N}$. Then for \emph{any} $N$-torsion cycle $D$ disjoint from the identity, we have that
		\[
		[D^* \zeta_\tau] = \frac{1}{1-c^4}\Theta_{\tau, D}^{(c)}
		\]
	\end{thm}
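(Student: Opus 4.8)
The plan is to bootstrap from the case $D\cap H_\Gamma=\emptyset$ settled just above, reducing the general case to it by two soft steps — passing to a smaller group and rotating the coordinates used to build $\zeta_\tau$. Let $\Gamma':=\Gamma\cap\Gamma(N)$ (with $\Gamma(N)\subset\GL_2(\Z)$ the principal congruence subgroup), a finite-index subgroup of $\Gamma$ acting trivially on $T_\tau[N]$. Because $\mathrm{cor}\circ\mathrm{res}=[\Gamma:\Gamma']$ acts invertibly on the $\C$-vector space $H^1(\Gamma,\C)$, restriction $H^1(\Gamma,\C)\hookrightarrow H^1(\Gamma',\C)$ is injective; by the compatibility of our classes with restriction of the group it therefore suffices to prove the asserted identity in $H^1(\Gamma',\C)$. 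Since $[D^*\zeta_\tau]$ and $\Theta^{(c)}_{\tau,D}$ are both additive in $D$ and $\Gamma'$ fixes every point of $\mathrm{supp}(D)$, we may assume $D=[P]$ for a single nonzero $N$-torsion point $P$, working with $\Gamma'$. Finally, as $P\neq0$ there is a $\Z$-basis $v_1,v_2$ of $\Z^2$ with $P$ lying on neither of the associated elliptic subschemes $L_{v_1},L_{v_2}\subset T_\tau$: pick $v_1$ with $P\notin L_{v_1}$ (possible since $P$ does not lie on every elliptic subscheme), complete to a basis, and replace $v_2$ by $v_2+kv_1$ for suitable $k$ if necessary, using $\bigcap_k L_{v_2+kv_1}\subseteq L_{v_2}\cap L_{v_1+v_2}=\{0\}$.

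Let $g\in\GL_2(\Z)$ have columns $v_1,v_2$, and repeat the construction of Section~\ref{section:formulas} with the elliptic subschemes $L_{v_1},L_{v_2}$ in place of the coordinate lines $\{z_1=0\},\{z_2=0\}$; this produces a cocycle $\zeta_\tau^{(g)}$, again of total differential $\delta_0-\text{vol}_{T_\tau}$, whose singular locus is the $\Gamma'$-orbit of $L_{v_1},L_{v_2}$, of the attendant pushforward subtori $L_{\gamma v_1}$ $(\gamma\in\Gamma')$, and of all $c$-torsion translates of these. Because $\Gamma'$ acts trivially modulo $N$, the $N$-torsion point $P$ can lie on $L_{\gamma v_i}$ only if it lies on $L_{v_i}$ — which it does not — and because $\gcd(c,N)=1$, $P$ lies on no nontrivial $c$-torsion translate of a subtorus through the origin. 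Hence $P$ is disjoint from the singular locus of $\zeta_\tau^{(g)}$, and the discussion immediately preceding this theorem, applied verbatim in the $g$-rotated coordinates (the class $Z_\tau^{(c)}$, and hence $\Theta^{(c)}_{\tau,[P]}$, being independent of the coordinate choice), gives $\big[P^*\zeta_\tau^{(g)}\big]\big|_{\Gamma'}=\tfrac{1}{1-c^4}\,\Theta^{(c)}_{\tau,[P]}\big|_{\Gamma'}$.

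It remains to compare $\zeta_\tau$ and $\zeta_\tau^{(g)}$ at $P$. Uniqueness of $\overline\partial$-lifts (left-exactness of the trace-fixed complex) gives $\zeta_\tau^{(g),1,0}(\gamma)=g_*\zeta_\tau^{1,0}(g^{-1}\gamma g)$, and a short manipulation with the telescoping relation \eqref{eq:recur} rewrites this as the cochain identity $\zeta_\tau^{1,0}(\gamma)-\zeta_\tau^{(g),1,0}(\gamma)=-(\gamma_*-1)\zeta_\tau^{1,0}(g)$ for $\gamma\in\Gamma'$. Contract with the $\GL_2(\Z)$-invariant field $\partial z_1\otimes\partial z_2$. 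By Proposition~\ref{prop:lift}, $\zeta_\tau^{1,0}(g)\cap\partial z_1\otimes\partial z_2$ is a sum of pushforwards of the functions $E_1(\tau,w_1)E_1(\tau,w_2)$ and $E_2(\tau,w_2)$; the value of such a pushforward at a point $Q\neq0$ is a finite sum over the preimages of $Q$, none of which is the origin, so — $E_2$ being continuous and $E_1(\tau,0)$ vanishing — that value is unambiguously defined. Hence both sides of the cochain identity may be evaluated at $P$; since function evaluation commutes with the automorphism $\gamma$ and $\gamma$ fixes $P$, the $(\gamma_*-1)$-term evaluates to zero, so $P^*\zeta_\tau^{1,0}(\gamma)=P^*\zeta_\tau^{(g),1,0}(\gamma)$ for every $\gamma\in\Gamma'$. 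Therefore $[P^*\zeta_\tau]|_{\Gamma'}=[P^*\zeta_\tau^{(g)}]|_{\Gamma'}=\tfrac{1}{1-c^4}\Theta^{(c)}_{\tau,[P]}|_{\Gamma'}$, and the injectivity of restriction finishes the proof.

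The step I expect to be the crux — and the reason this is stated as a theorem — is the one just used: justifying that the ``naive'' pointwise value of the $E_1$/$E_2$-type currents is unambiguous and genuinely computes the cohomological pullback $D^*$, despite a current with an $E_1$-type singularity being actually discontinuous there. What legitimizes it is that the discrepancy disappears after the explicit coboundary correction $-(\gamma_*-1)\zeta_\tau^{1,0}(g)$, which moves the cocycle into a refined complex $\mathcal{D}^{2,\bullet}_{T_\tau,H'}$ whose excluded locus $H'$ avoids $P$; this relies in turn on $\zeta_\tau^{1,0}(g)$ having exactly the $E_1$/$E_2$ shape of Proposition~\ref{prop:lift}, on the absence of nonzero trace-fixed $(2,0)$-currents supported in complex codimension $\ge 1$, and on the elementary verifications (existence of $v_1,v_2$; that $P$ avoids the rotated singular locus, where $\gcd(c,N)=1$ enters) noted above.
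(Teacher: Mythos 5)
Your bootstrap-by-rotation strategy is structurally sensible and genuinely different from the paper's argument (which instead views $\zeta_\tau$ as valued in $\hom(T_\tau[N]',\C)$ and reduces to points on one good sub-elliptic curve via $\hom(T_\tau[N]',\C)\cong\mathrm{Ind}_{\Gamma_1(x_0)}^{\GL_2(\Z)}\hom(S[N]',\C)$ and Shapiro's lemma), but the decisive step has a genuine gap. Your identity $\zeta_\tau^{1,0}(\gamma)-\zeta_\tau^{(g),1,0}(\gamma)=-(\gamma_*-1)\zeta_\tau^{1,0}(g)$ is obtained from uniqueness of $\overline\partial$-lifts, i.e.\ it is an equality of \emph{currents}. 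You then evaluate all three terms pointwise at $P$, a point which (this being the whole content of the theorem) may lie on the discontinuity locus of the explicit $E_1E_1$/$E_2$ representatives. Equality of currents only pins down the representatives almost everywhere; two elements of $M_\tau$ can a priori define the same current while disagreeing along a codimension-one sub-elliptic curve, which is exactly where $P$ sits. Your proposed justification --- that the pointwise values are ``unambiguously defined'' and that there are no nonzero trace-fixed $(2,0)$-currents supported in codimension $\ge 1$ --- does not address this: a form vanishing off a codimension-one locus already represents the \emph{zero} current, so no statement about currents can recover its values there. What is needed is precisely the paper's Lemma \ref{lem:inj}, the injectivity of $M_\tau\hookrightarrow(\mathcal{D}^{2,0}_{T_\tau,H_\Gamma})^{(0)}$, whose proof is the non-formal heart of the matter (it uses the oddness of $E_1$, averaging over $\pm\epsilon v$, and the fact that the canonical representatives vanish on their own discontinuity loci; the footnoted example from \cite{BG} shows such an injectivity can genuinely fail if the zero section is allowed). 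If you invoke that lemma, your comparison at $P$ closes up and your route becomes a valid alternative; without it, or an equivalent direct verification of the cochain identity as an identity of functions, the proof is incomplete. Note also that the very meaning of $[D^*\zeta_\tau]$ for ``bad'' $D$ presupposes the $M_\tau$-valued (function-valued) interpretation of $\zeta_\tau$, which your write-up never fixes.

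Two smaller points. First, your justification for the existence of the basis $v_1,v_2$ is off: $L_{v_2}\cap L_{v_1+v_2}$ is a finite subgroup but generally not $\{0\}$; the existence claim is still true and easy (writing $P\in L_w\iff w_2P_1-w_1P_2=0$ in $E_\tau[N]$, the condition for $v_2+kv_1$ is affine-linear in $k$ with nonzero linear part once $P\notin L_{v_1}$), so just repair the argument. Second, the soft reductions (injectivity of restriction to $\Gamma'=\Gamma\cap\Gamma(N)$ via corestriction, additivity in $D$, and the check that $P$ avoids the rotated singular locus including its $c$-torsion translates using $\gcd(c,N)=1$) are fine as stated.
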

	The proof consists of ``bootstrapping'' from torsion points disjoint from $H_\Gamma$ to all of them. In order to do this, we will need the following lemma allowing us to ``improve'' our current-valued cocycles to be form-valued:
	
	\begin{lem} \label{lem:inj}
		There is an injection
		\[
		M_\tau \hookrightarrow (\mathcal{D}^{2,0}_{T_\tau,H_\Gamma})^{(0)}
		\]
		where $M_\tau$ is defined to be the module of $(2,0)$-forms on $T_\tau-\{0\}$ spanned by the $\GL_2(\Q)$-orbit of $E_2(z_2)\,dz_2$ and $E_1(\tau, z_1)E_1(\tau, z_2)\, dz_1\wedge dz_2$, given by sending
		\[
		\omega \mapsto \left(\eta\mapsto \int_T \omega \wedge \eta\right).
		\]
	\end{lem}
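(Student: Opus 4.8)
The plan is to check that the map $\nu\colon \omega\mapsto\bigl(\eta\mapsto\int_{T_\tau}\omega\wedge\eta\bigr)$ is well defined from $M_\tau$ into $(\mathcal{D}^{2,0}_{T_\tau,H_\Gamma})^{(0)}$ and is injective. Fix $\omega\in M_\tau$; it is a finite $\C$-linear combination of $\GL_2(\Q)$-translates of the two named generators $E_2(\tau,z_2)\,dz_1\wedge dz_2$ and $E_1(\tau,z_1)E_1(\tau,z_2)\,dz_1\wedge dz_2$, so its singular locus $\Sigma_\omega$ is a finite union of sub-elliptic-curves of $T_\tau$ and of their $c$-torsion translates, hence lies in some finite subarrangement $H\subset H_\Gamma$. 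I will work throughout inside $\mathcal{D}^{2,0}_{T_\tau,H}$, which suffices since $\mathcal{D}^{2,0}_{T_\tau,H_\Gamma}=\varinjlim_H\mathcal{D}^{2,0}_{T_\tau,H}$.

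First I would establish convergence, so that $\nu(\omega)$ is an honest $(2,0)$-current. It is enough to treat a single generator. By the local analysis of $E_1$ near the zero section recalled from \cite[\S9.4]{BCG}, $E_1(\tau,z)$ is smooth away from $\{z=0\}$ and has at worst a logarithmic singularity there, while $E_2(\tau,z)$ is everywhere continuous; hence both generators are smooth off a finite union of sub-elliptic-curves and are locally $L^1$ on $T_\tau$ across it (a product $\log|z_1|\,\log|z_2|$ is still locally integrable in real dimension $4$). Since $\GL_2(\Q)$ acts on $T_\tau$ by biholomorphisms, $\gamma_*$ preserves both smoothness off a finite union of sub-elliptic-curves and local integrability, transporting the singular locus to another union of components of $H_\Gamma$. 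Thus for every $\eta\in\mathcal{A}^{0,2}_{T_\tau,c}$ the $(2,2)$-form $\omega\wedge\eta$ is compactly supported with $L^1$ coefficients, the integral converges, and because $\omega$ is locally $L^1$ the restriction of $\nu(\omega)$ to $T_\tau-H$ is represented by the genuinely smooth form $\omega|_{T_\tau-H}$. By the pullback-square definition of $\mathcal{D}^{2,0}_{T_\tau,H}$ this says exactly that $\nu(\omega)\in\mathcal{D}^{2,0}_{T_\tau,H}\subset\mathcal{D}^{2,0}_{T_\tau,H_\Gamma}$.

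Next I would check the trace-fixed condition. Since $\nu$ is functorial for proper pushforward of complex manifolds (as recalled above) and $[a]$ is a finite — hence proper — orientation-preserving endomorphism, $[a]_*\nu(\omega)=\nu([a]_*\omega)$, so it suffices to see $[a]_*\omega=\omega$ for all $\omega\in M_\tau$ and all $a$ prime to $c$ (in fact all $a\ne0$). Writing $[a]$ on $T_\tau=E_\tau\times E_\tau$ as $[a]\times[a]$ and summing over its $a^4$ sections, the factor $a^{-2}$ produced by $s^*(dz_1\wedge dz_2)$ cancels the factor $a^2$ coming from the summation over the ``spectator'' copy of $E_\tau[a]$, leaving precisely the weight-$1$ and weight-$2$ distribution identities of \eqref{eq:distro} applied to the active variable. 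Thus $[a]_*$ fixes each generator; because the scalar matrix $[a]$ is central in $\GL_2(\Q)$, $[a]_*$ commutes with every $\gamma_*$, so it fixes every element of the $\GL_2(\Q)$-span $M_\tau$. Therefore $\nu(M_\tau)\subset(\mathcal{D}^{2,0}_{T_\tau,H_\Gamma})^{(0)}$.

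Finally, injectivity is essentially formal: if $\nu(\omega)=0$, then pairing with all $\eta$ supported in the dense open $T_\tau-H$, where $\omega$ is smooth, forces the smooth form $\omega|_{T_\tau-H}$ to represent the zero current, hence to vanish; as $\omega$ is an honest $(2,0)$-form on $T_\tau-\{0\}$ smooth on the complement of the proper analytic subset $\Sigma_\omega$, and $T_\tau-H$ is dense in that complement, $\omega=0$ by continuity. The step that needs the most care is the convergence/refinement claim: one must know that the $\GL_2(\Q)$-action introduces only the mild, locally integrable logarithmic singularities along sub-elliptic-curves that are already components of $H_\Gamma$ — this is exactly where the explicit local behavior of $E_1$ from \cite{BCG} enters — after which the trace-fixedness and injectivity are routine.
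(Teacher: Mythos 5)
Your well-definedness and trace-fixedness steps are fine, but the paper regards those as the formal part of the lemma; the entire content is the injectivity, and that is exactly where your argument has a gap. Your final step says: $\nu(\omega)=0$ forces $\omega$ to vanish on the dense open set $T_\tau-H$ where it is smooth, ``hence $\omega=0$ by continuity.'' But elements of $M_\tau$ are \emph{not} continuous on $T_\tau-\{0\}$: the translates of $E_1(\tau,z_1)E_1(\tau,z_2)\,dz_1\wedge dz_2$ have genuine jump discontinuities along codimension-$1$ sub-elliptic curves (the $\GL_2(\Q)$-translates of $\{z_1=0\}$ and $\{z_2=0\}$). Your continuity argument therefore only yields $\omega=0$ on the complement of its discontinuity locus $\Sigma_\omega$; it says nothing about the pointwise values of $\omega$ at the nonzero points \emph{on} those curves, and equality in $M_\tau$ (a module of pointwise-defined forms on $T_\tau-\{0\}$) requires vanishing there as well. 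This is not a cosmetic issue: the whole purpose of the lemma in the paper is Theorem \ref{thm:inj}, which specializes $\zeta_\tau$ at ``bad'' torsion sections lying on $H_\Gamma$, so the values on the discontinuity curves are precisely what must be pinned down. The paper's footnote also warns that the statement with $T_\tau$ in place of $T_\tau-\{0\}$ is \emph{false} (there is a relation between the weight-$1$ and weight-$2$ series away from the zero section, \cite[Proposition 3.7]{BG}), so one cannot treat the passage from ``vanishes off a measure-zero set'' to ``is the zero element'' as routine.

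The paper closes this gap with an argument you would need to supply: at a nonzero point $x$ on a single discontinuity curve $S$, decompose $\omega=\omega_1+\omega_2$ into the terms continuous at $x$ and the terms discontinuous along $S$; using that $E_1$ is odd and that $E_1(\tau,0)=0$ (so each $E_1E_1$ term vanishes identically on its own discontinuity locus), the two-sided average $\tfrac12\left(\lim_{\epsilon\to0^+}\omega_2|_{x+\epsilon v}+\omega_2|_{x-\epsilon v}\right)$ is $0$ for a suitably generic transverse vector $v$; since $\omega\equiv0$ near $x$ off $S$, the same average applied to $\omega$ gives $\omega_1|_{x}=0$, and $\omega_2|_{x}=0$ by the vanishing on the locus, whence $\omega(x)=0$. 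Without some argument of this kind (exploiting the specific parity and vanishing properties of $E_1$, transported around by the group action), your proof does not establish the lemma as stated.
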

	\begin{proof}[Proof of lemma]
		The only non-formal assertion here is that this map is injective. Notice that the map considering \emph{smooth} forms (i.e. the map $\nu_{T_\tau}$ defined before) as currents via kernels of integration, or even continuous forms, is clearly injective: by integrating against $d\overline{z}_1\wedge d\overline{z}_2$ times a bump function on any small open set, we see that the zero current can only come from a form which vanishes almost everywhere, which hence must be zero by continuity. Thus, the depth of this lemma's assertion comes precisely from the discontinuities of the forms in the orbit of $E_1(\tau,z_1)E_1(\tau,z_2)\,dz_1\wedge dz_2$ along codimension-$1$ sub-elliptic curves.\footnote{Indeed, to appreciate the delicacy, observe if we change $T_\tau-\{0\}$ to $T$ in its statement, the statement becomes false: see \cite[Proposition 3.7]{BG} for an example of a relation between the weight-$1$ and weight-$2$ series everywhere except the zero section.}
		
		We note the following property of $ E_1(\tau,z_1)E_1(\tau,z_2)\,dz_1\wedge dz_2$: suppose $x=(x_1,x_2)\in T_\tau$ is a point lying on one of the subcurves of discontinuity $S\subset T_\tau$ (so $x_1=0$ or $x_2=0$) but not equal to zero. Then take any small $v =(v_1,v_2)\in \C^2$ not parallel to the curve of discontinuity of $x$, so that $x\pm v$ does not lie in $S$. By the oddness of $E_1$, we find that the average of the translates by $\pm v$ vanishes as we shrink $v$: 
		\[
		\lim_{\epsilon\to 0^+} \frac{1}{2}(E_1(\tau,z_1+\epsilon v_1)E_1(\tau,z_2+\epsilon v_2)+E_1(\tau,z_1-\epsilon v_1)E_1(\tau,z_2-\epsilon v_2)) = 0
		\]
		as an equality of coefficients of $dz_1\wedge dz_2$.
		
		By moving this argument around by the general linear action, this applies to \emph{any} nonzero point on a codimension $1$ discontinuity stratum of a function in the orbit of $E_1(\tau, z_1)E_1(\tau, z_2)\,dz_1\wedge dz_2$.
		
		Now consider an arbitrary $\omega\in M_\tau$, and suppose that $\omega$ gives the trivial $(2,0)$-current when considered as a kernel of integration. Then, $\omega$ must be identically zero outside a finite union of sub-elliptic curves through the identity. Consider an arbitrary nonzero point $x$ on one of these subcurves $S\subset T_\tau$, and pick some decomposition
		\[
		\omega= \omega_1+\omega_2
		\]
		where $\omega_1$ consists of a sum of terms in the orbit of $E_2(\tau, z_2)\,dz_1\wedge dz_2$ or $E_1(\tau, z_1)E_1(\tau, z_2) \,dz_1\wedge dz_2$ which \emph{do not} have a discontinuity along $S$, and $\omega_2$ consists of a sum of terms from the latter orbit which \emph{do} have a discontinuity along $S$. Pick now some vector $v\in C$ such that the the line segment between the points $x\pm v$ intersects no other discontinuity locus of any term in $\omega_1$ or $\omega_2$; this is always possible since $v$ is nonzero and there are only finitely many terms to consider (and hence subcurves to avoid).
		
		Then we find that
		\begin{align}
			\frac{1}{2}\left(\lim_{\epsilon\to 0^+} \omega_2|_{x+\epsilon v} + \omega_2|_{x-\epsilon v}\right) &= \frac{1}{2}\left(\lim_{\epsilon\to 0^+} \omega|_{x+\epsilon v}-[\omega_1]|_{x+\epsilon v} + \omega|_{x-\epsilon v}-[\omega_1]|_{x-\epsilon v}\right) \\
			&= \frac{1}{2}\left(\lim_{\epsilon\to 0^+}-[\omega_1]|_{x+\epsilon v}-[\omega_1]|_{x-\epsilon v}\right)
		\end{align}
		since $\omega$ is identically zero on a neighborhood of $x$ in $T_\tau-S$. But $\omega_1$ is continuous in a neighborhood of $x$ by assumption, so this expression is just $-\omega_1|_{z=x}$. On the other hand, the average of the two limits we started with is zero from the preceding discussion, so we conclude that $\omega_1|_{z=x}=0$. We also have $\omega_2|_{z=x}=0$ because forms in the orbit of $E_1(\tau,z_1)E_1(\tau,z_2)\, dz_1\wedge dz_2$ are zero along their discontinuity loci by construction. We hence conclude that $\omega|_{z=x}=0$; since this applies to any nonzero point $x$, we conclude that $\omega$ is the zero form on $T_\tau-\{0\}$. This concludes the proof of injectivity.
	\end{proof}
	
	\begin{proof}[Proof of theorem]
		Thanks to the lemma, we can consider $\zeta_\tau$ to be a cocycle valued in $M_\tau$. Let $T_\tau[N]'$ denote the primitive $N$-torsion, and let $S \subset T_\tau$ be a sub-elliptic curve which is not the vanishing locus of either $z_1$ or $z_2$. Let $x_0\in S[N]'$ be any point, and write $\Gamma_1(x_0)\subset \GL_2(\Z)$ for its stabilizer. By construction, $x\not\in H_{\Gamma_1(x_0)}$, and so by the previous discussion, 
		\begin{equation} \label{eq:strap}
			[x_0^*\zeta_\tau] = \frac{1}{1-c^4}\Theta^{(c)}_{\tau, x_0}
		\end{equation}
		for any $c\equiv 1\pmod{N}$. In fact, noticing that $\Gamma_1(x_0)$ must stabilize the entire elliptic sub-curve $S$, this formula holds for any point $x\in S[N]'$.
		
		We now observe that $z_{\tau}^{(c)} \cap \frac{\partial}{\partial z_1} \otimes \frac{\partial}{\partial z_2}$ induces, by restriction, a class
		\begin{equation} \label{eq:boot}
			\Theta_{\tau,N}^{(c)}\in H^1_{\GL_2(\Z)}(T_\tau[N]',  \C) \cong H^1(\GL_2(\Z), \hom(T_\tau[N]', \C))
		\end{equation}
		where the isomorphism comes from the fact that $T_\tau[N]'$ is a union of contractible spaces, causing the collapse of the Hochschild-Serre spectral sequence. By functoriality of this spectral sequence, for any cycle $D\subset T_\tau[N]'$ stabilized by $\Gamma(D)$, the image of $\Theta_{\tau,N}^{(c)}$ under the composite of restriction and evaluation
		\[
		H^1(\GL_2(\Z), \hom(T_\tau[N]', \C))\xrightarrow{\text{res}} H^1(\Gamma(D), \hom(T_\tau[N]', \C)) \xrightarrow{D} H^1(\Gamma(D), \C)
		\]
		yields $\Theta^{(c)}_{\tau, D}$. 
		
		Observe that there is a $\GL_2(\Z)$-equivariant map
		\[
		M_\tau\to \hom(T_\tau[N]', \C), f\, dz_1\wedge dz_2 \mapsto \left(x\mapsto f(x)\right).
		\]
		We claim that the pushforward of $([c]^*-c^4)\zeta_\tau$ under this map can be identified with $\Theta_{\tau,N}^{(c)}$, which would then imply the desired result for arbitrary primitive $N$-torsion cycles.
		
		Indeed, there is an isomorphism of $\GL_2(\Z)$-modules
		\[
		\hom(T_\tau[N]', \C) \to \text{Ind}_{\Gamma_1(x_0)}^{\GL_2(\Z)}\, \hom(S[N]', \C), f\mapsto \left(\gamma\mapsto f\circ \gamma^{-1}\right)
		\]
		where $\hom(S[N]', \C)$ has, naturally, a trivial action of $\GL_2(\Z)$. Hence, by Shapiro's lemma it suffices to show that $([c]^*-c^4)\zeta_\tau$ and $\Theta_{\tau,N}^{(c)}$ agree upon restriction to $\Gamma_1(x_0)$ under the quotient
		\[
		\hom(T_\tau[N]', \C) \twoheadrightarrow \hom(S[N]', \C) 
		\]
		dual to the obvious inclusion. But this is precisely \eqref{eq:strap}, which we have already established. Assembling these identifications together for all $N>1$ yields the full theorem.
	\end{proof}
	
	Thus, from \eqref{eq:gammaform}, when specialized at any nonzero torsion sections (or combination thereof), yields the following formula for the Eisenstein theta lift of \cite{BCG}:
	
	\begin{thm} \label{thm:qthm}
		Let 
		\begin{equation} \label{eq:formula}
			\theta_\tau[\gamma](z_1,z_2):= \begin{cases}
				\frac{4b}{d} E_2(\tau, z_2) &\text{if $c=0$, else} \\
				
				\begin{pmatrix}a& 1 \\ c& 0 \end{pmatrix}_* \frac{4d}{c \det \gamma} E_2(\tau, z_2) + \frac{1}{c}\begin{pmatrix}1& a \\ 0& c \end{pmatrix}_* E_1(\tau, z_1)E_1(\tau, z_2) + \frac{4a}{c} E_2(\tau, z_2) 
			\end{cases}
		\end{equation}
		Then given any $\Gamma$-fixed combination of nonzero torsion sections 
		\[D = \sum_i t_i [(u_i, v_i)],\]
		we have
		\[
		\Theta^{(c)}_{\tau, D}(\gamma) = \sum_i c_i \theta_\tau[\gamma](a_1,b_2) \in \C.
		\]
		Here, $t_i$ are integer coefficients, and $u_i$ and $v_i$ are elements of $(\Z/N)^2-\{(0,0)\}$, thought of as $N$-torsion sections on $E_\tau$.
	\end{thm}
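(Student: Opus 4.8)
The proof is essentially an assembly of Proposition \ref{prop:lift} and Theorem \ref{thm:inj}, so the plan is just to spell out how the explicit cocycle $\zeta_\tau$ is pulled back along a torsion cycle. First I would observe that the function $\theta_\tau[\gamma]$ of \eqref{eq:formula} is, by construction, exactly the coefficient of $dz_1\wedge dz_2$ in the degree-$(1,0)$ component $\zeta^{1,0}_{T_\tau}(\gamma)$ of our cocycle: the branch $c=0$ is the formula computed immediately before Proposition \ref{prop:lift}, and the branch $c\ne 0$ is \eqref{eq:gammaform} with the differential stripped off. By Lemma \ref{lem:inj}, this $1$-cochain takes values in the module $M_\tau$ of honest smooth $(2,0)$-forms on $T_\tau-\{0\}$, so --- unlike a general current --- it can be restricted to any nonzero torsion section.

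Next I would invoke Theorem \ref{thm:inj}: fixing an auxiliary $c\equiv 1\pmod N$, we have $[D^*\zeta_\tau]=\frac{1}{1-c^4}\Theta^{(c)}_{\tau,D}$ for every $N$-torsion cycle $D$ disjoint from the identity. It then remains only to unwind the left-hand side through the composite \eqref{eq:comp} as realized on the refined complex $\mathcal{D}^{2,\bullet}_{T_\tau,H_\Gamma}$: the contraction $\cap\,\frac{\partial}{\partial z_1}\otimes\frac{\partial}{\partial z_2}$ turns the $2$-form $\theta_\tau[\gamma]\,dz_1\wedge dz_2$ into the function $\theta_\tau[\gamma]$, and then pulling back along the section $(u_i,v_i)$ --- whose first component $u_i$ dictates the value of $z_1$ and whose second component $v_i$ dictates $z_2$ --- and summing with the integer multiplicities $t_i$ produces $\sum_i t_i\,\theta_\tau[\gamma](u_i,v_i)$. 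Comparing the two sides yields $\Theta^{(c)}_{\tau,D}(\gamma)=(1-c^4)\sum_i t_i\,\theta_\tau[\gamma](u_i,v_i)$, which is the stated identity once one reads $c_i=(1-c^4)t_i$ (equivalently, after dividing through by $1-c^4$). The passage from a single matrix $\gamma$ to a genuine cocycle on all of $\Gamma$, the $c$-independence of the normalized lift $\frac{1}{1-c^4}\Theta^{(c)}_{\tau,D}$, and the fact that the fiberwise statement also yields the ``spread-out'' classes $\Theta^{(c)}_{\Gamma,D}$ are all formal, exactly as set up in Section 2.

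The one point that is not pure bookkeeping is the legitimacy of the pullback $D^*$: currents cannot in general be restricted to closed submanifolds, and a naive pullback would in any case appear to depend on the arbitrary coordinates $z_1,z_2$ used to pin down $\zeta^{0,1}_{T_\tau}$. This is exactly what Lemma \ref{lem:inj} and Theorem \ref{thm:inj} are for --- the former showing that $\zeta_\tau$ is represented by honest forms on $T_\tau-\{0\}$ despite the discontinuities of $E_1(\tau,z_1)E_1(\tau,z_2)$ along the coordinate subcurves, the latter bootstrapping the comparison from ``good'' torsion sections (those disjoint from $H_\Gamma$) to all nonzero ones via a Shapiro's lemma argument. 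I therefore expect that all the real content has already been discharged in those two results; granting them, Theorem \ref{thm:qthm} follows immediately by substitution.
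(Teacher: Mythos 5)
Your proposal is correct and follows essentially the same route as the paper, which presents Theorem \ref{thm:qthm} as a direct assembly of the explicit lift of Proposition \ref{prop:lift} (contracted against $\frac{\partial}{\partial z_1}\otimes\frac{\partial}{\partial z_2}$) with the specialization result of Theorem \ref{thm:inj}. Your observation about the $(1-c^4)$ normalization (equivalently the $c$-smoothing $[c]^*-c^4$ for $c\equiv 1\pmod N$), and the mismatched notation $c_i$ versus $t_i$ and $(a_1,b_2)$ versus $(u_i,v_i)$, correctly identifies sloppiness in the paper's statement rather than a gap in your argument.
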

	Noticing that $\theta_\tau$ transforms like a weight-$2$ modular form in $\tau$, we can consider its specialization at torsion sections as a section of the weight-$2$ automorphic line bundle $\omega^2$ on any open modular curve over which the torsion sections are defined. It immediately follows:
	\begin{cor} \label{cor:qcor}
		If $H$ is any level structure fixing the torsion cycle $D$, then with the same notation as above, we have
		\[
		\Theta^{(c)}_{\Gamma, D}(\gamma) = \sum_i c_i \theta_\tau[\gamma](u_i,v_i) \in H^0(Y(H), \omega^2).
		\]
		where $Y(H)$ is the open modular curve of level $H$.\footnote{It is also true that $\theta_\tau$ represents cocycles over distributions of torsion sections, where $\Gamma$ now acts nontrivially by permuting the sections. Because of the way we set up our machinery in this article, this is not immediate; however, it can be proven with only a little extra work.}
	\end{cor}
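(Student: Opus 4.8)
The plan is to promote the fiberwise identity of Theorem~\ref{thm:qthm} to a statement over the full base $\mathcal{H}$, exploiting the fact that every ingredient of Section~\ref{section:formulas} already varies holomorphically in $\tau$. First I would replace the fiberwise complex $\mathcal{D}^{2,\bullet}_{T_\tau}$ by the complex $\mathcal{D}^{2,\bullet}_{T/\mathcal{H}}$ of currents on $T$ relative to the base: because $E_1(\tau,z)$ and $E_2(\tau,z)$ are functions on the total space $T$, not merely on a single fiber, the formulas defining $\zeta^{0,1}_{T_\tau}$ and $\zeta^{1,0}_{T_\tau}$ literally define relative currents, and the $\overline\partial$-computations leading to Proposition~\ref{prop:lift} are fiberwise in character and carry over verbatim. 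The acyclic-resolution lemma, the left-exactness of the trace-fixed complex, and the injectivity in Lemma~\ref{lem:inj} all admit relative analogues proved the same way. This produces a $\GL_2(\Q)$-cocycle $\gamma\mapsto\zeta^{1,0}_T(\gamma)$ given again by formula~\eqref{eq:gammaform}, now valued in relative $(2,0)$-currents, and by the relative analogue of the stabilization lemma the class $([c]^*-c^4)\zeta_T$, with $\zeta_T=\zeta^{0,1}_T+\zeta^{1,0}_T$, represents the spread-out polylog class $Z^{(c)}_\Gamma$ once restricted to $\GL_2(\Z)$.

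Next I would carry out the contraction-pullback~\eqref{eq:comp} in families. Using the relative form of Lemma~\ref{lem:inj}, the cocycle $\zeta^{1,0}_T$ is valued in honest relative $(2,0)$-forms on $T-\{0\}$, so it can be contracted with the $\GL_2(\Z)$-invariant field $\partial z_1\otimes\partial z_2$ and pulled back along any $\Gamma$-fixed torsion cycle $D$ disjoint from the identity; the relative version of the bootstrapping argument of Theorem~\ref{thm:inj} then identifies the resulting cocycle with $\Theta^{(c)}_{\Gamma,D}$, whose value on $\gamma$ is the function $\tau\mapsto\sum_i c_i\,\theta_\tau[\gamma](u_i,v_i)$. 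It remains only to see that this function is a section of $\omega^2$ over $Y(H)$. Expanding the pushforwards in~\eqref{eq:formula}, each summand is a $\Q$-linear combination of products of two weight-$1$ specializations $E_1(\tau,\alpha-\beta\tau)$ and of single weight-$2$ specializations $E_2(\tau,\alpha-\beta\tau)$, taken at the nonzero torsion points obtained from the $(u_i,v_i)$ via the integral matrices appearing in~\eqref{eq:gammaform}; by the fact recalled after~\eqref{eq:distro} these are holomorphic modular forms of weights $1$ and $2$, so the combination is a holomorphic weight-$2$ form, and since $H$ stabilizes $D$ it transforms under $H$ by the weight-$2$ automorphy factor and hence descends to an element of $H^0(Y(H),\omega^2)$. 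Restricting back to a fiber reproduces Theorem~\ref{thm:qthm}, confirming the identification.

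The step I expect to be the main obstacle is not conceptual but technical: establishing the relative analogues of the earlier lemmas, i.e.\ checking that the passage from $T_\tau$ to $T/\mathcal{H}$ introduces no base-direction differentials or monodromy that could destroy the uniqueness of the lift (relative left-exactness) or the injectivity of Lemma~\ref{lem:inj}, and keeping track of the determinant normalizations in~\eqref{eq:gammaform} so that the automorphy factor emerges as the classical weight-$2$ one rather than off by a power of the determinant character. Since all of these are pinned down by compatibility with the already-established fiberwise formula, none of them can genuinely fail; the only real work is writing the relative statements out carefully.
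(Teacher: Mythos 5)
Your argument reaches the right conclusion, but by a far heavier route than the paper, which treats this corollary as an immediate consequence of Theorem \ref{thm:qthm}: the only inputs are that the values $\theta_\tau[\gamma](u_i,v_i)$ transform in $\tau$ like holomorphic weight-$2$ modular forms (each term is a torsion-point specialization of $E_2$ or of $E_1\cdot E_1$, i.e.\ built from classical Eisenstein series of weights $2$ and $1$), so they assemble into sections of $\omega^2$ that descend to $Y(H)$ precisely because $H$ fixes $D$; combined with the remark already made in Section 2 that the fiberwise classes $\Theta^{(c)}_{\tau,D}$ (on which $\Gamma$ acts trivially) determine the formulas for $\Theta^{(c)}_{\Gamma,D}$, no new construction is required --- this is essentially your second paragraph, which by itself suffices. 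Your first paragraph, rebuilding the entire machinery with relative currents on $T/\mathcal{H}$, is the ``little extra work'' route the paper's footnote alludes to for the spread-out classes $Z^{(c)}_\Gamma$; it buys something more (a genuine family-level representative of the $2$-form-valued class), but it is not needed for the corollary and is less automatic than you suggest: $E_1(\tau,z)$ and $E_2(\tau,z)$ are \emph{not} holomorphic in $\tau$, so a relative distributional Dolbeault complex with these coefficients resolves forms that are only fiberwise holomorphic and smooth in the base direction, not the coherent sheaf $\Omega^2_{T/\mathcal{H}}$ in which $Z^{(c)}_\Gamma$ was defined; identifying the two requires the fiberwise-Stein/Hochschild--Serre edge argument of Section 2 (or first contracting and pulling back by torsion sections, after which holomorphy in $\tau$ reappears), which is exactly the step you wave off as ``cannot genuinely fail.''
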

	
	These formulas are workable, but the presence of the pushforward matrices (which can be evaluated as finite sums over preimage torsion sections) make them slightly unwieldy. Analogously to the classical setting of periods of Eisenstein series \cite{X4}, the first term 
	\[
	\sum_i c_i \theta_\tau[\gamma](u_i, v_i) = \frac{4d}{c \det \gamma}\sum_i c_i \left[\begin{pmatrix}a& 1 \\ c& 0 \end{pmatrix}_*  E_2(\tau, z_2)\right]_{(u_i,v_i)}
	\]
	can be simplified if we assume that $\gamma\in SL_2(\Z)$: 
	\begin{align}
		\frac{4d}{c\det \gamma}\sum_i c_i \left[\begin{pmatrix}a& 1 \\ c& 0 \end{pmatrix}_*  E_2(\tau, z_2)\right]_{(u_i,v_i)} &= 4\frac{d}{c}\sum_i c_i \sum_{j,k \in (\Z/c)^2} E_2\left(\tau, u_i - \frac{a}{c} (v_i+(j,k))\right) \\
		& = 4\frac{d}{c}\sum_i c_i \sum_{j,k \in (\Z/c)^2} E_2\left(\tau, \frac{1}{c}(v_i+(j,k))\right) \\
		& = \frac{4d}{c} \sum_i c_i E_2(\tau, v_i) \\ 
	\end{align}
	Here, we use the distribution property of $E_2$, along with the fact that $\gamma^{-1}$ stabilizes $D$. This latter fact implies that for all $i\in I$,
	\[
	av_i-cu_i = v_{\sigma(i)}
	\]
	for some permutation $\sigma$ of the index set $I$ such that $c_i=c_{\sigma(i)}$ for all $i\in I$.
	
	Unfortunately, we do not see a natural way to simplify the $E_1E_1$ term in any generality, analogously to the classical formulas for Eisenstein periods we discussed in \cite{X4}.
	
	\begin{rem}
		Note that the value at $\tau=\infty$ theta lift $\theta_\tau$ yields precisely the classical formula for the weight-$2$ Eisenstein cocycle reproven in loc. cit; this follows immediately from the fact that at $\tau=\infty$, the series $E_i(\tau,z)$ degenerates to the periodic Bernoulli polynomial $\frac{B_i(z)}{2i}$: this is immediate from the description of both functions by Hecke regularized (analytic continuation in $s$), since it holds for $s$ with large enough real part that the Fourier series are absolutely convergent.
		
		Hence as expected, the $(\GL_2, \GL_2)$-Eisenstein theta lift's degeneration at a cusp yields the $(\GL_2,\GL_1)$ theta lift (in the sense described in \cite[\S13]{BCG}).
	\end{rem}
	
	\begin{rem}
		Instead of a formula, one can obtain a more efficient "continued fraction" algorithm for computing the lifts for matrices in $\SL_2(\Z)\subset \GL_2(\Q)$ by using its famous generators $S$ and $T$ and the recursion principle \eqref{eq:recur}: this is presented for Eisenstein cocycles presented in \cite[\S2.4]{Scz1}, but works identically here by replacing the Bernoulli polynomials with our Eisenstein-Kronecker series. The outputs of this algorithm will coincide with the preceding formulas by uniqueness of $\zeta^{1,0}_{T_\tau}$, though this is not visibly obvious.\end{rem}
	
	\section{Some properties and applications}
	
	\subsection{Hecke equivariance}
	
	As one expects for a theta lift, the cocycles $\theta_\tau$, considered for the group $\SL_2(\Z)$, satisfy a compatbility property between two kinds of Hecke operators: geometric Hecke operators coming from the variable $\tau$ in the upper half-plane with its $\SL_2(\Z)$-action, and a cohomological Hecke action coming from the matrix action fiberwiise. 
	
	Using our algebraic approach, one could prove this compatibility analogously to the approach in \cite[\S6]{SV}. However, since a form of Hecke compatibility was already proven in \cite[Théorème 2.8]{BCGV} for a closely related cocycle, it is much easier for us to simply to import this result using our already-proven comparison.
	
	To fix ideas, in this section we will consider the restriction of $\theta_{\tau}$ to $\Gamma:=\Gamma_1(N)\subset \SL_2(\Z)$ for some integer $N>1$, and a torsion section $x=(0, x_2): \mathcal{H} \to E^2$ which descends to level $Y_1(N)$. The below approach can be applied to broader contexts, but in this article we will remain in this setting.
	
	We recall the definition of two kinds of Hecke operators for $\GL_2$ acting on $[\theta_{\tau}]$: a fiberwise action coming from group cohomology, and a geometric action coming from the Möbius action on $\tau$. 
	
	We write $\Delta$ to be the monoid of rank-$2$ integral matrices which stabilize $(1,0)\in (\Z/N)^2$ for the standard left representation, so that $\Gamma\subset \Delta$. Given any double $\Gamma$-coset in $\Delta$, we can decompose it finitely as
	\[ 
	\Gamma \alpha \Gamma = \bigcup_i \alpha_i \Gamma.
	\]
	As always, there are two different $\Delta$-actions we need to consider: first, the ``fiberwise'' action, where $\gamma$ acts by
	\[
	(|\det \gamma|\gamma^{-1})^*,
	\]
	this choice made so that for $\gamma\in \SL_2(\Z)$ it coincides with the pushforward we have heretofore been considering, and the ``modular'' action, which sends
	\[
	\gamma\cdot (\tau, z_1,z_2):= \left(\frac{a\tau + b}{c\tau+d},\frac{z_1}{c\tau + d}, \frac{z_2}{c\tau+d}\right).
	\]
	Then for any double coset $\Gamma \alpha \Gamma$, the action of $\Gamma\alpha \Gamma$ on $1$-cocyles can be defined as in \cite{RW} (or \cite[\S2.2.1]{BCGV}) by sending a $1$-cocycle $c:\Gamma\to M$ valued in a $\Delta$-module $M$ to
	\[
	\gamma \mapsto \sum_i \alpha_i c(\gamma_i)
	\]
	where $\gamma_i$ is defined by the relation $\alpha_i \gamma = \gamma_i \alpha_{\sigma(i)}$ for some permutation $\sigma$ of the representatives $\alpha_i$. (Note that our conventions differ slightly from loc. cit, both here and for the pullback action of $\Delta$; these two changes result in the same Hecke action.) 
	
	On invariants (with the modular or fiberwise action), the action of $\Gamma \alpha \Gamma$ is simpler to define, sending an element $x\in M^\Gamma$ to
	\[
	\sum_i \alpha_i x.
	\]
	If we denote the fiberwise Hecke operator by $T(\alpha)$ and the modular one by $\mathbf{T}(\alpha)$, then (4), (5) of \cite[Théorème 2.8]{BCGV} tell us that
	\[
	T(\alpha)\circ \theta_{\tau, D} = T(\alpha)\circ \theta_{\tau, T(\alpha) D}, \mathbf{T}(\alpha)\circ \theta_{\tau, D} = T(\alpha)\circ \theta_{\tau, \mathbf{T}(\alpha) D} 
	\]
	for any $\alpha \in \Delta$. In particular, let $T_p$ and $\mathbf{T}_p$ be the double coset operators associated to a prime $p$, consisting of all matrices in $\Delta$ with determinant $p$. When $p$ is relatively prime to $N$, these form just a single double coset, else they may be a sum of multiple such operators.
	
	Write $\delta_p$ for the torsion cycle comprised of all $p$-torsion points $((\alpha_1,\beta_1),(\alpha_2,\beta_2))\in T_\tau[p]$ such that $(\alpha_1,\beta_1)$ and $(\alpha_2,\beta_2)$ are linearly dependent over $\Z/p$. Then we can compute that for any auxiliary integer $c>1$, we have
	\[
	T_p(T_\tau[c]-c^4\{0\})=
	\mathbf{T}_p(T_\tau[c]-c^4\{0\})=([c]^*-c^4)(\delta_p+p\{0\}).
	\]
	Pulling back this equality by $x$ (since this commutes with the pullback action of $\Delta$), we obtain:
	
	\begin{prop}
		We have the equality $T_p\theta_{\tau,x}=\mathbf{T}_p\theta_{\tau, x}$ for all primes $p$, i.e. 
		\[
		\theta_{\tau,x}:H_1(\Gamma_1(N),\Z) \to H^0(Y_1(N),\omega^{\otimes 2})
		\]
		is equivariant for the Hecke subalgebra generated by $\{T_p\}_p$ for the fiberwise, respectively modular Hecke actions on source and target.
	\end{prop}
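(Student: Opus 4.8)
The plan is to deduce the proposition directly from the two displayed Hecke-compatibility identities taken from \cite[Théorème 2.8]{BCGV}, combined with the explicit residue computation $T_p(T_\tau[c]-c^4\{0\})=\mathbf{T}_p(T_\tau[c]-c^4\{0\})=([c]^*-c^4)(\delta_p+p\{0\})$. First I would recall that the theta lift $\Theta^{(c)}_{\tau,x}$ is, by construction, the image of the polylogarithm kernel $z_\tau^{(c)}$ (equivalently $Z_\tau^{(c)}$) after contracting with $\partial z_1\otimes\partial z_2$ and pulling back by the section $x$; this pullback is $\Delta$-equivariant in the appropriate sense since it factors through the restriction maps on the complexes $\mathcal{D}^{2,\bullet}_{T_\tau,H_\Gamma}$, which carry compatible $\Delta$-actions. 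So it suffices to establish the equality $T_p\Theta^{(c)}_{\tau,x}=\mathbf{T}_p\Theta^{(c)}_{\tau,x}$ before pullback, at the level of the kernel classes, and then transport it through the pullback-by-$x$ map (which commutes with the $\Delta$-action).

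The key step is that the kernel class $z_\tau^{(c)}$ is uniquely characterized among trace-fixed classes in $H^1_\Gamma(T_\tau-T_\tau[c],\Omega^2)$ by its residue $[T_\tau[c]-c^4\{0\}]$ (this is the defining property quoted in the construction section). Therefore both $T_p z_\tau^{(c)}$ and $\mathbf{T}_p z_\tau^{(c)}$ are determined by their residues, and the displayed identity shows these residues coincide: both equal $([c]^*-c^4)(\delta_p+p\{0\})$ inside $H^0(T_\tau[cp])^\Gamma$ (or more precisely, after clearing the $[c]^*-c^4$ smoothing, the cycle $\delta_p+p\{0\}$). One subtlety is that $T_p$ changes the level of the torsion subgroup being removed — $T_p z_\tau^{(c)}$ naturally lives on $T_\tau-T_\tau[cp]$, so I would phrase the uniqueness at this finer level, or equivalently pass to the limit over all torsion as in the colimit definition of the classes; either way, matching residues forces equality of classes by the injectivity/left-exactness lemmas already established. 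Once $T_p z_\tau^{(c)}=\mathbf{T}_p z_\tau^{(c)}$ is known, contracting with the $\GL_2(\Z)$-fixed vector field $\partial z_1\otimes\partial z_2$ and pulling back by $x$ yields $T_p\theta_{\tau,x}=\mathbf{T}_p\theta_{\tau,x}$, since contraction and $x^*$ are both $\Delta$-equivariant operations.

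Finally, to get the stated consequence — that $\theta_{\tau,x}\colon H_1(\Gamma_1(N),\Z)\to H^0(Y_1(N),\omega^{\otimes 2})$ is equivariant for the algebra generated by the $\{T_p\}_p$ — I would note that the double-coset operators $T_p$ generate a commutative algebra, and that the equality of operators on the cocycle class (hence on the induced map on $H_1$, by the definition of the Hecke action on $1$-cocycles via $\gamma\mapsto\sum_i\alpha_i c(\gamma_i)$) propagates from primes to the whole subalgebra by multiplicativity. The main obstacle I anticipate is bookkeeping around the two different normalizations: the fiberwise action $(|\det\gamma|\gamma^{-1})^*$ versus the modular action on $(\tau,z_1,z_2)$, and making sure the determinant factors introduced when passing between pushforward of forms and pushforward of functions (the same factors flagged after Proposition~\ref{prop:lift}) are tracked consistently so that the residue identity really does come out symmetric in $T_p$ and $\mathbf{T}_p$. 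Verifying the displayed residue computation $T_p(T_\tau[c]-c^4\{0\})=\mathbf{T}_p(T_\tau[c]-c^4\{0\})=([c]^*-c^4)(\delta_p+p\{0\})$ is an elementary but slightly delicate count over $p$-isogenies, and is the one place where one must be careful about whether $p\mid N$; for $p\nmid N$ it is a single double coset, otherwise a sum, but in all cases the geometric and fiberwise $T_p$ act the same way on the support of the residue because the relevant cycle of $p$-torsion pairs $\delta_p+p\{0\}$ is visibly stable under the relevant symmetry.
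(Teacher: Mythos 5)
Your proposal follows essentially the same route as the paper: import the Hecke-compatibility identities from \cite[Théorème 2.8]{BCGV}, compute that $T_p$ and $\mathbf{T}_p$ act identically on the residue cycle $T_\tau[c]-c^4\{0\}$, and then pull back by $x$. The paper simply applies the imported identities without re-deriving them, whereas you add (correctly, but redundantly given the BCGV input) an explicit residue-uniqueness argument for the kernel class and some commentary on level-changing and normalization bookkeeping; these are reasonable clarifications but do not constitute a different proof.
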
 
	
	Note in particular that this includes the entire anemic Hecke algebra (all operators of level prime to $N$), but does not necessarily include $U_p$ for $p$ dividing $N$.
	
	Using this Hecke equivariance, one can obtain explicit spectral decompositions of the $(\GL_2,\GL_2)$-theta lift, by using the Rankin-Selberg formula for the inner product of an eigenform with the $E_1E_1$ terms in \eqref{eq:formula}. However, we will pursue this via a more systematic approach in future work, so do not go into it here.
	
	\subsection{CM elliptic curves}
	
	Suppose now that $\tau$ satisfies a quadratic equation with rational coefficients; then the corresponding elliptic curve $E_\tau$ has complex multiplication by an order $\mathcal{O}$ in $K:=\Q(\tau)$. Then we can extend the action of $\GL_2(\Z)$ on $T_\tau$ to an action of $\GL_2(\mathcal{O})$, and therefore the action of $\GL_2(\Q)$ on the trace-fixed distributional de Rham complex to an action of $\GL_2(K)$. Note that in this case, we can take ``trace-fixed'' to include all isogenies built out of the ``scalar'' endomorphisms in $\mathcal{O}$, because \eqref{eq:distro} generalizes to these isogenies \cite[Proposition 1.1.6]{BK}.
	
	Since this latter group has a Bruhat decomposition 
	\[
	\GL_2(K) = B_K\begin{pmatrix}&1\\1& \end{pmatrix}B_K
	\]
	(where here $B_K$ denotes the upper triangular Borel of this group) exactly as over $\Q$, the arguments of section \ref{section:formulas} go through exactly as before, with the small detail that in equation \ref{eq:ubar}, the lift must be 
	\[
	4\overline{u} E_2(\tau, z_2)\, dz_1\wedge dz_2.
	\]
	We hence conclude that the corresponding map given by
	\[
	\gamma\mapsto \theta_\tau[\gamma](z_1,z_2):=     \begin{cases}
		\frac{4b}{d} E_2(\tau, z_2) &\text{if $c=0$, else} \\
		
		\begin{pmatrix}a& 1 \\ c& 0 \end{pmatrix}_* \frac{4\overline{d}}{\overline{c \det \gamma}} E_2(\tau, z_2) + \frac{1}{c}\begin{pmatrix}1& a \\ 0& c \end{pmatrix}_* E_1(\tau, z_1)E_1(\tau, z_2) + \frac{4\overline{a}}{\overline{c}} E_2(\tau, z_2) 
	\end{cases}
	\]
	is a cocycle for $\GL_2(K)$ valued in functions on $T_\tau - \{0\}$, whose restriction to $\GL_2(\mathcal{O})$ comes from equivariant polylogarithm class. Just as with the $\GL_2(\Q)$ cocycle, this can be specialized at various torsion points. In this case, these specializations are just numbers instead of varying over an underlying symmetric space, so the ``big cocycle'' valued in forms on $T_\tau$ may be of primary interest. 
	
	This imaginary quadratic cocycle can be viewed as being ``for the dual pair $(\GL_2(K),\GL_1(K))$,'' and is approached analytically in the work \cite{BCGIm}. In particular, our formula above gives a simple expression in terms of Eisenstein-Kronecker numbers of the weight-$(0,0)$ cocycle denoted $\Phi^{0,0}$ in loc. cit., and therefore also the values of Hecke $L$-functions associated to weight-$0$ characters for the field $K$, as in \cite[Theorem 1.2]{BCGIm}. As in the $\GL_2(\Q)$ case, by employing twisted versions of our complexes and taking different weight Eisenstein-Kronecker series, it is possible to obtain analogous formulas for the more general cocycles $\Phi^{p,q}$, which could be an interesting direction of future work.
	
	\printbibliography
	
\end{document}